\newtheorem{theorem}{Theorem}[section]
\newtheorem{corollary}[theorem]{Corollary}
\newtheorem{lemma}[theorem]{Lemma}
\theoremstyle{definition}
\newtheorem{definition}[theorem]{Definition}
\newtheorem{proposition}[theorem]{Proposition}
\newtheorem{conjecture}[theorem]{Conjecture}
\newtheorem*{theorem*}{Theorem}
\newtheorem*{lemma*}{Lemma}
\title{A graph energy conjecture through the lenses \\of semidefinite programming}
\author{Aida Abiad\thanks{\texttt{a.abiad.monge@tue.nl},\\ 
Department of Mathematics and Computer Science, Eindhoven University of Technology, The Netherlands \\ Department of Mathematics and Data Science, Vrije Universiteit Brussel, Belgium}
\quad
Gabriel Coutinho\thanks{\texttt{gabriel@dcc.ufmg.br},\\ Department of Computer Science, Federal University of Minas Gerais, Brazil} 
\quad
Emanuel Juliano\thanks{\texttt{emanuelsilva@dcc.ufmg.br},\\ Department of Computer Science, Federal University of Minas Gerais, Brazil}
\quad Luuk Reijnders\thanks{\texttt{l.e.r.m.reijnders@tue.nl},\\ Department of Mathematics and Computer Science, Eindhoven University of Technology, The Netherlands}}
\date{}
\begin{document}

\maketitle

\begin{abstract}
    Let $G$ be a graph on $n$ vertices with independence number $\alpha(G)$. Let $\mathcal{E}(G)$ be the energy of a graph, defined as the sum of the absolute values of the adjacency eigenvalues of $G$. Using Graffiti, Fajtlowicz conjectured in the 1980s that $$\frac{1}{2}\mathcal{E}(G) \geq n - \alpha(G).$$

    In this paper we derive a semidefinite program formulation of the graph energy, and we use it to obtain several results that constitute a first step towards proving this conjecture. In particular,  we show that 
    $$\frac{1}{2}\mathcal{E}(G) \geq n-\chi_f(\overline{G}) \quad \text{ and }\quad \frac{1}{2}\mathcal{E}(G) \geq n-H(G),$$ where $\chi_f(G)$ is the fractional chromatic number and $H(G)$ is Hoffman's ratio number. As a byproduct of the SDP formulation we obtain several lower bounds for the graph energy that improve and refine previous results by Hoffman (1970) and Nikiforov (2007). The later author showed that the conjecture holds for almost all graphs. However, the graph families known to attain the conjecture with equality are highly structured and do not represent typical graphs. Motivated by this, we prove the following bound in support of the conjecture for the class of highly regular graphs $$\frac{1}{2}\mathcal{E}(G) \geq n-\vartheta^-(G),$$
    where $\vartheta^-$ is Schrijver's theta number.\\

    
\end{abstract}

\section{Introduction}

Let $G$ be a graph on $n$ vertices and A be its adjacency matrix with eigenvalues $\lambda_1 \geq \dots \geq \lambda_n$. The \emph{energy} of $G$ is defined as 
\begin{equation*}
    \mathcal{E}(G) = \sum_{i=1}^n |\lambda_i|.
\end{equation*}

The graph energy, which was originally introduced by Gutman in 1978 motivated by applications in molecular chemistry, has attracted a considerable amount of attention over the last decades, see e.g. \cite{akbari2019proof, gutman2001energy,VANDAM2014123}.

In this paper, we investigate the following spectral conjecture concerning graph energy, which was proposed by Fajtlowicz using Graffiti in the 1980s (c.f. \cite{Aouchiche2010}), and which recently appeared in a survey by Liu and Ning \cite{liu_unsolved_2023}:     
    \begin{conjecture}[\cite{liu_unsolved_2023}, Conjecture 20]\label{con:energy_lb} 
        Let $G$ be a graph on $n$ vertices with independence number $\alpha(G)$. Let $A$ be the adjacency matrix of $G$ with eigenvalues $\lambda_1\geq\cdots \geq \lambda_n$. Then
        \begin{equation*}
            \sum_{\lambda_i > 0} \lambda_i = \frac{1}{2}\mathcal{E}(G) \geq n-\alpha(G).
        \end{equation*}
    \end{conjecture}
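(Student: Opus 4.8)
The plan is to write $\tfrac12\mathcal{E}(G)=\sum_{\lambda_i>0}\lambda_i$ as the value of a semidefinite program and then prove the bound by exhibiting a feasible point. Write $A=\sum_{i=1}^n\lambda_i u_iu_i^{\top}$ with $\{u_i\}$ orthonormal. The spectral projector $P=\sum_{\lambda_i>0}u_iu_i^{\top}$ satisfies $0\preceq P\preceq I$ and $\langle A,P\rangle=\sum_{\lambda_i>0}\lambda_i$; conversely, for any $X$ with $0\preceq X\preceq I$ we have $\langle A,X\rangle=\sum_i\lambda_i\,(u_i^{\top}Xu_i)\le\sum_{\lambda_i>0}\lambda_i$ because each $u_i^{\top}Xu_i\in[0,1]$. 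Hence
\begin{equation*}
\tfrac12\,\mathcal{E}(G)=\max\bigl\{\,\langle A,X\rangle \;:\; 0\preceq X\preceq I\,\bigr\},
\end{equation*}
and Conjecture~\ref{con:energy_lb} is equivalent to the existence, for every $G$, of a matrix $X$ with $0\preceq X\preceq I$ and $\langle A,X\rangle\ge n-\alpha(G)$.

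Before the general case I would make two harmless reductions. Both sides are additive over connected components ($\mathcal{E}$, $n$, and $\alpha$ all are), so one may assume $G$ connected; and deleting an isolated vertex changes neither $\mathcal{E}(G)$ nor $n-\alpha(G)$, so one may assume $\delta(G)\ge 1$. It is also worth recalling the shape of the problem: for dense or random $G$ one has $\mathcal{E}(G)=\Theta(n^{3/2})$ while $n-\alpha(G)=O(n)$, so the inequality is wildly loose there (this is, in effect, Nikiforov's ``almost all graphs'' observation), and the difficulty lies entirely with sparse, highly structured graphs, where the inequality is tight. A first, provable family of results is obtained by feeding into the program above the dual optimal solution of an SDP relaxation of $\alpha(G)$: using the sandwich $\alpha(G)\le\vartheta^{-}(G)\le\vartheta(G)\le\chi_f(\overline{G})$, each such relaxation yields (after normalization) a feasible $X$ and hence a bound $\tfrac12\mathcal{E}(G)\ge n-\chi_f(\overline{G})$, and similarly the bounds in terms of $H(G)$ and $\vartheta^{-}(G)$. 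What the full conjecture asks for is to push the certificate all the way down from $\vartheta^{-}(G)$ to $\alpha(G)$.

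For that final step the idea I would pursue is to let the certificate $X$ genuinely ``see'' a maximum independent set $S$, $|S|=\alpha(G)$, not merely the spectrum of $A$. Cauchy interlacing applied to the zero principal submatrix $A[S]$ already gives $\lambda_{n-\alpha+1}(A)\le0$, so $A$ has at most $n-\alpha$ positive eigenvalues; the conjecture thus says that these few positive eigenvalues have sum at least their own number, i.e.\ are on average at least $1$. The natural attempt is to take $X$ to be the orthogonal projector onto a well-chosen $(n-\alpha)$-dimensional subspace --- morally one that is orthogonal to the coordinate directions on $S$ and aligned with the top of the spectrum --- and to bound $\langle A,X\rangle$ from below combinatorially, exploiting that maximality of $S$ forces every vertex of $T=V\setminus S$ to have a neighbour in $S$, and playing the edge counts across $(S,T)$ and inside $T$ against quotient-matrix interlacing for the partition $\{S,T\}$ (or a refinement of it). One then optimizes over the choice of subspace.

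The step I expect to be the main obstacle is exactly this last one. A certificate $X$ that depends only on spectral or convex-geometric data of $G$ cannot beat $n-\vartheta^{-}(G)$, and for irregular graphs $\vartheta^{-}(G)>\alpha(G)$ strictly; so a proof of the conjecture must use the independent set $S$ in an essential, combinatorial way. Yet the extremal graphs (complete multipartite graphs and disjoint unions thereof, among others) show that the inequality is tight, so every estimate in the construction must be essentially lossless --- there is no slack to spend. Reconciling these two requirements, a combinatorially sensitive \emph{and} tight SDP certificate bounded above by $I$, is the crux, and is why the results below stop at the relaxations $\chi_f(\overline{G})$, $H(G)$ and $\vartheta^{-}(G)$ rather than at $\alpha(G)$ itself.
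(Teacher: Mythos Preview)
The statement is a \emph{conjecture} that the paper does not prove in full; your proposal is likewise not a proof but a research outline, and in that respect it mirrors the paper's treatment closely. Both you and the paper (i) establish the SDP formulation $\tfrac12\mathcal{E}(G)=\max\{\langle A,X\rangle:0\preceq X\preceq I\}$, (ii) derive the partial bounds $n-\chi_f(\overline{G})$, $n-H(G)$, $n-\vartheta^{-}(G)$ by exhibiting feasible $X$, and (iii) acknowledge that closing the gap down to $\alpha(G)$ remains open. Your proof of the SDP formulation via $u_i^{\top}Xu_i\in[0,1]$ is a bit more direct than the paper's, which argues instead through weak duality with the dual program $\min\{\langle I,Y\rangle:Y\succeq 0,\ Y-A\succeq 0\}$ (SDP~\eqref{eq:SDP_min}); both are correct and standard.

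One point where you are looser than the paper: you suggest that the bound $\tfrac12\mathcal{E}(G)\ge n-\vartheta^{-}(G)$ follows for general $G$ by the same ``feed in the dual relaxation'' mechanism. The paper only establishes this for \emph{highly regular} graphs (Theorem~\ref{thm:highly_regular}), and its proof genuinely uses the algebra structure --- constant-diagonal projectors, a $0/1$-basis splitting $A$, and closure of the PSD cone under Schur product (Lemma~\ref{lem:key_lemma_highly_regular}) --- to control the top eigenvalue of the scaled Szegedy-optimal $Y$ so that $X=\vartheta^{-}(G)\cdot Y$ satisfies $X\preceq I$. For arbitrary graphs no such eigenvalue bound is available, and the $\vartheta^{-}$ inequality is not known. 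Your final diagnosis of the obstruction --- that a certificate depending only on spectral/convex data cannot beat $n-\vartheta^{-}(G)$, while a combinatorially sensitive certificate using a maximum independent set must still be lossless on the extremal families --- is accurate and is exactly where the paper leaves the problem.
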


Conjecture \ref{con:energy_lb} has been computationally verified for all graphs with up to 10 vertices \cite{liu_unsolved_2023} and shown to hold for almost all graphs \cite{nikiforov2007energy}.

Semidefinite programming (SDP) methods have been successful in tackling several graph theory problems, such as graph partitioning~\cite{sotirov2014efficient} and bounding graph parameters~\cite{laurent2024semidefinite}. More recently, these techniques have also been applied to variations of graph energy \cite{coutinho_conic_2024, zhang_extremal_2024}. 
Following this promising line of research, in this paper we propose to tackle Conjecture \ref{con:energy_lb} by first formulating the graph energy as an SDP (see SDP \eqref{eq:SDP_max}), and then using feasible solutions to this SDP to work towards proving the conjecture. 

Our first result is thus the following SDP formulation of the graph energy part of Conjecture \ref{con:energy_lb}.

\begin{theorem} \label{thm:SDP_max}
    Let $G$ be an $n$-vertex graph, then
    \begin{equation*}
        \frac{1}{2}  \mathcal{E}(G) = \sum_{\lambda_i > 0} \lambda_i = \max \{\langle A, X \rangle : I \succcurlyeq X \succcurlyeq 0\}.
    \end{equation*}
\end{theorem}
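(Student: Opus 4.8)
The plan is to prove the two equalities separately. The first one, $\sum_{\lambda_i>0}\lambda_i=\tfrac12\mathcal{E}(G)$, is immediate from $\operatorname{tr}(A)=0$: since $\sum_i\lambda_i=0$ we get $\sum_{\lambda_i>0}\lambda_i=-\sum_{\lambda_i<0}\lambda_i=\sum_{\lambda_i<0}|\lambda_i|$, so $\mathcal{E}(G)=\sum_i|\lambda_i|=2\sum_{\lambda_i>0}\lambda_i$. The substance is the SDP identity, which I would establish in the usual two moves: first show that every feasible $X$ satisfies $\langle A,X\rangle\le\sum_{\lambda_i>0}\lambda_i$, and then exhibit one feasible $X$ attaining this value.

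For the upper bound, fix an orthonormal eigenbasis $u_1,\dots,u_n$ of $A$ with $Au_i=\lambda_i u_i$. Since $A$ is symmetric, $\langle A,X\rangle=\operatorname{tr}(AX)=\sum_{i=1}^n\lambda_i\,u_i^\top X u_i$. Feasibility $0\preccurlyeq X\preccurlyeq I$ forces $0\le u_i^\top X u_i\le 1$ for every $i$, so each summand obeys $\lambda_i\,u_i^\top X u_i\le\lambda_i$ when $\lambda_i>0$ and $\lambda_i\,u_i^\top X u_i\le 0$ when $\lambda_i\le 0$; summing gives $\langle A,X\rangle\le\sum_{\lambda_i>0}\lambda_i$. (One could alternatively invoke von Neumann's trace inequality, but the direct diagonalization argument is cleaner since $A$ is symmetric.) For attainment, take $X^\star=\sum_{\lambda_i>0}u_i u_i^\top$, the orthogonal projector onto the span of the positive eigenspaces. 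Its eigenvalues are $0$ and $1$, hence $I\succcurlyeq X^\star\succcurlyeq 0$, and $\langle A,X^\star\rangle=\sum_i\lambda_i\,u_i^\top X^\star u_i=\sum_{\lambda_i>0}\lambda_i$. Combining the two bounds proves the theorem.

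I do not anticipate a genuine obstacle: this is essentially a one-page spectral-decomposition exercise. The only points needing a moment's care are the passage from the matrix inequality $0\preccurlyeq X\preccurlyeq I$ to the scalar bounds $u_i^\top X u_i\in[0,1]$, and checking that the proposed optimizer $X^\star$ is genuinely feasible (which it is, being a projector). If one wanted to say more, it would be natural to record that strong duality holds here and to write down the dual program, since that is the form in which this SDP becomes a tool for the later bounds involving $\chi_f(\overline{G})$, $H(G)$ and $\vartheta^-(G)$; but for the statement as given, the primal argument above is all that is required.
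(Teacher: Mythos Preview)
Your proof is correct. Both you and the paper exhibit the same optimizer $X^\star=\sum_{\lambda_i>0}E_i$, but the upper-bound half is handled differently. You diagonalize $A$ and bound $\langle A,X\rangle=\sum_i\lambda_i\,u_i^\top X u_i$ termwise using $u_i^\top X u_i\in[0,1]$; the paper instead writes down the dual SDP $\min\{\langle I,Y\rangle: Y\succcurlyeq 0,\ Y-A\succcurlyeq 0\}$, proves weak duality $\langle A,X\rangle\le\langle Y,X\rangle\le\langle I,Y\rangle$ via the two PSD inner products $\langle X,Y-A\rangle\ge 0$ and $\langle I-X,Y\rangle\ge 0$, and then exhibits the dual feasible $Y=\sum_{\lambda_i>0}\lambda_iE_i$ with the same objective value. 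Your argument is more elementary and self-contained; the paper's buys an explicit dual certificate and the statement of strong duality, which is conceptually nice even though the rest of the paper only ever plugs feasible $X$'s into the primal. You already anticipated this in your closing remark.
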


Our approach is to construct feasible solutions for the SDP formulation of Theorem \ref{thm:SDP_max} and use them to further support Conjecture \ref{con:energy_lb}. As a byproduct, each of our results yields a new lower bound on the graph energy, strengthening previously known bounds (see Andrade et al. \cite[Thm.~1.1]{andrade2017lower}, Day–So \cite[Thm.~3.4]{day2008graph}, Hoffman \cite[Eq.~6.1]{hoffman2003eigenvalues}, and Nikiforov \cite[Eq.~4]{nikiforov2007energy}).

One of the motivations for Conjecture \ref{con:energy_lb} is an old result of Hoffman \cite[Eq. 6.1]{hoffman2003eigenvalues}, which states that $$\frac{1}{2} \mathcal{E}(G) \geq n -  \chi(\overline{G}),$$ from which Conjecture \ref{con:energy_lb} follows for perfect graphs. Here $\chi(\overline{G})$ is the smallest number of cliques required to cover the vertices of $G$. Our second main result is a strengthening of the aforementioned Hoffman's result. We replace the $\chi(\overline{G})$ in Hoffman's result by $\chi_f(\overline{G})$ (recall that $\chi_f(\overline{G})$ is a relaxation of the chromatic number called the fractional chromatic number and satisfies $\chi_f(\overline{G}) \leq \chi(\overline{G})$).

\begin{theorem} \label{thm:energy_n_minus_chi_f}
    Let $G$ be a $n$-vertex graph. Then,
    \begin{equation*}
        \frac{1}{2} \mathcal{E}(G) \geq n -  \chi_f(\overline{G}).
    \end{equation*}
\end{theorem}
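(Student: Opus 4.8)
The plan is to read the inequality off the SDP of Theorem~\ref{thm:SDP_max}. Since $\tfrac12\mathcal E(G)$ equals the \emph{maximum} of $\langle A,X\rangle$ over all $X$ with $I\succcurlyeq X\succcurlyeq 0$, it is enough to exhibit one feasible matrix $X$ with $\langle A,X\rangle\ge n-\chi_f(\overline G)$, and I would build it from a fractional clique cover of $G$. Recall that $\chi_f(\overline G)$ is precisely the fractional clique cover number of $G$, i.e.\ the minimum of $\sum_i y_i$ over all families of cliques $K_1,\dots,K_m$ of $G$ with weights $y_i\ge 0$ satisfying $\sum_{i\,:\,v\in K_i}y_i\ge 1$ for every vertex $v$; moreover this optimum is attained by an \emph{exact} cover, one with $\sum_{i\,:\,v\in K_i}y_i=1$ for all $v$. (This is standard: an optimal $b$-fold colouring of $\overline G$ produces cliques $K_1,\dots,K_a$ of $G$ with every vertex lying in exactly $b$ of them and $a/b=\chi_f(\overline G)$, so one may take all weights equal to $1/b$; alternatively one trims an arbitrary optimal cover one over-covered vertex at a time.) Fixing such an exact optimal cover, I would set
\[
    X \;=\; \sum_{i=1}^m \frac{y_i}{|K_i|}\,\mathbf 1_{K_i}\mathbf 1_{K_i}^{\mathsf T},
\]
where $\mathbf 1_{K_i}\in\{0,1\}^n$ denotes the indicator vector of the clique $K_i$.

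The core of the argument is verifying that $X$ is feasible. Positive semidefiniteness is immediate, since $X$ is a nonnegative combination of the rank-one positive semidefinite matrices $\mathbf 1_{K_i}\mathbf 1_{K_i}^{\mathsf T}$. For $X\preccurlyeq I$ I would argue by hand: for any $x\in\mathbb R^n$, Cauchy--Schwarz gives $\bigl(\sum_{v\in K_i}x_v\bigr)^2\le |K_i|\sum_{v\in K_i}x_v^2$, whence
\[
    x^{\mathsf T}Xx \;=\; \sum_{i=1}^m \frac{y_i}{|K_i|}\Bigl(\sum_{v\in K_i}x_v\Bigr)^{2}
    \;\le\; \sum_{i=1}^m y_i\sum_{v\in K_i}x_v^2
    \;=\; \sum_{v} x_v^2 \sum_{i\,:\,v\in K_i} y_i
    \;=\; \sum_v x_v^2 \;=\; \|x\|^2,
\]
the third equality being exactly where the exactness of the cover enters. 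Hence $0\preccurlyeq X\preccurlyeq I$.

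It then remains to evaluate the objective. Since each $K_i$ is a clique, $\langle A,\mathbf 1_{K_i}\mathbf 1_{K_i}^{\mathsf T}\rangle=\sum_{u,v\in K_i}A_{uv}=|K_i|(|K_i|-1)$, so, again using exactness of the cover and a double count,
\[
    \langle A,X\rangle \;=\; \sum_{i=1}^m y_i\bigl(|K_i|-1\bigr)
    \;=\; \sum_{v}\ \sum_{i\,:\,v\in K_i} y_i \;-\; \sum_{i=1}^m y_i
    \;=\; n-\chi_f(\overline G).
\]
Feeding this feasible $X$ into Theorem~\ref{thm:SDP_max} yields $\tfrac12\mathcal E(G)\ge\langle A,X\rangle=n-\chi_f(\overline G)$, as desired. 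The one genuinely delicate point is the reduction to an \emph{exact} fractional clique cover: the Cauchy--Schwarz estimate requires each vertex weight to be at most $1$, while the final double count requires it to be at least $1$, so both inequalities in the definition of $\chi_f(\overline G)$ have to be made tight at once; everything else is routine bookkeeping.
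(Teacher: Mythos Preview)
Your proof is correct and is essentially the same as the paper's: both construct the feasible matrix $X=\sum_i \tfrac{y_i}{|K_i|}\mathbf 1_{K_i}\mathbf 1_{K_i}^{\mathsf T}$ from an optimal exact fractional clique cover, verify $0\preccurlyeq X\preccurlyeq I$, and read off $\langle A,X\rangle=n-\chi_f(\overline G)$. The only difference is packaging: the paper routes the argument through a general energy--decomposition lemma (Lemma~\ref{lem:convex_comb_orth_matrices}/\ref{lem:fractional_subgraph_covering}) which, when specialised to cliques and the optimal SDP solution $\tfrac{1}{k}J_k$ for $K_k$, unwinds to exactly your Cauchy--Schwarz verification.
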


In particular, Theorem \ref{thm:energy_n_minus_chi_f} proves Conjecture \ref{con:energy_lb} for graphs which satisfy $\alpha(G) = \chi_f(\overline{G})$. Moreover, in order to show Theorem \ref{thm:energy_n_minus_chi_f} we derive a new lower bound for the graph energy based on a graph decomposition (Lemma \ref{lem:convex_comb_orth_matrices}). This result, which is of independent interest, extends previous work by Andrade, Robbiano and San Martin \cite{andrade2017lower}, and Day and So \cite{day2008graph}.

A celebrated result in spectral graph theory is Hoffman's \emph{ratio bound} on the independence number of a graph, which says that for a regular graph it holds that

\begin{equation}\label{eq:ratiobound}
\alpha(G)\leq H(G) := n \frac{-\lambda_n}{\lambda_1 - \lambda_n}.
\end{equation}

Our third main result relates the graph energy with the above ratio bound \eqref{eq:ratiobound} as follows.

\begin{theorem} \label{thm:energy_n_minus_ratio}
    Let $G \not \cong \overline{K_n}$ be a $n$-vertex regular graph. Then,
    \begin{equation*}
        \frac{1}{2} \mathcal{E}(G) \geq n -  H(G).
    \end{equation*}
\end{theorem}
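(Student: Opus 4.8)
The plan is to prove Theorem~\ref{thm:energy_n_minus_ratio} by exhibiting a single explicit feasible point of the semidefinite program in Theorem~\ref{thm:SDP_max} whose objective value is exactly $n - H(G)$. Write $k = \lambda_1$ for the common degree of $G$. Since $G$ is regular and $G \not\cong \overline{K_n}$, its adjacency matrix is not the zero matrix, so $\lambda_n < \lambda_1$ and the matrix
\[
X := \frac{1}{\lambda_1 - \lambda_n}\,\bigl(A - \lambda_n I\bigr)
\]
is well defined. Its eigenvalues are $(\lambda_i - \lambda_n)/(\lambda_1 - \lambda_n)$, all lying in $[0,1]$, so $I \succcurlyeq X \succcurlyeq 0$ and $X$ is feasible for SDP~\eqref{eq:SDP_max}.

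The next step is to evaluate the objective on $X$. Using $\langle A, X\rangle = \operatorname{tr}(AX)$ together with the facts that $\operatorname{tr}(A) = 0$ and, by regularity, $\operatorname{tr}(A^2) = \sum_i \lambda_i^2 = nk$, one obtains
\[
\langle A, X\rangle = \frac{\operatorname{tr}(A^2) - \lambda_n \operatorname{tr}(A)}{\lambda_1 - \lambda_n} = \frac{nk}{k - \lambda_n}.
\]
A short rearrangement gives $n - H(G) = n - \dfrac{-n\lambda_n}{k - \lambda_n} = \dfrac{nk}{k - \lambda_n}$, hence $\langle A, X\rangle = n - H(G)$. Combining this with Theorem~\ref{thm:SDP_max} yields $\tfrac{1}{2}\mathcal{E}(G) \ge \langle A, X\rangle = n - H(G)$, which is the claim.

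There is essentially no hard step once the SDP reformulation of Theorem~\ref{thm:SDP_max} is available: the only real choice is which feasible matrix to substitute, and $(A - \lambda_n I)/(\lambda_1 - \lambda_n)$ — precisely the matrix underlying the standard proof of Hoffman's ratio bound~\eqref{eq:ratiobound} — is the natural candidate, which is why the bound comes out with equality in the computation. The minor points requiring care are (i) the strict inequality $\lambda_1 \neq \lambda_n$, which is exactly where the hypothesis $G \not\cong \overline{K_n}$ is used (the empty graph is the only regular graph with $A = 0$, and the only one for which $H(G)$ is undefined), and (ii) the identity $\operatorname{tr}(A^2) = nk$, which relies on regularity. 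One may additionally remark on when equality $\tfrac12\mathcal{E}(G) = n - H(G)$ can hold, which happens precisely when $X$ above is an optimal solution of SDP~\eqref{eq:SDP_max}, i.e.\ when the negative eigenspace of $A$ is spanned by $\lambda_n$-eigenvectors.
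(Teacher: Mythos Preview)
Your proof is correct and is essentially the paper's own argument: the paper also plugs $X = aA + cI$ with $a = 1/(\lambda_1-\lambda_n)$ into SDP~\eqref{eq:SDP_max} (this is Lemma~\ref{lem:energy_bound_adjacency}), obtaining $\tfrac12\mathcal{E}(G)\ge 2m/(\lambda_1-\lambda_n)$, and then uses $2m = nk$ and the same rearrangement to get $n - H(G)$. The only cosmetic difference is that you fix the shift explicitly as $c = -\lambda_n/(\lambda_1-\lambda_n)$, whereas the paper leaves $c$ implicit; the objective value is of course the same either way.
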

In particular, Theorem \ref{thm:energy_n_minus_ratio} tells us Conjecture \ref{con:energy_lb} is true for all graphs which meet Hoffman's ratio bound. 

Based on the techniques used in Theorem \ref{thm:energy_n_minus_chi_f}, we prove the following lower bound on the graph energy.

\begin{theorem} \label{thm:energy_regular_scaled_complement}
    Let $G \not \cong K_n, \overline{K_n}$ be a $n$-vertex regular graph. Then,
    \begin{equation*}
        \frac{1}{2} \mathcal{E}(G) \geq \frac{2m - \lambda_1(\lambda_1 - \lambda_2)}{\lambda_2 - \lambda_n}.
    \end{equation*}
\end{theorem}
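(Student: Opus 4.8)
The plan is to exhibit an explicit feasible solution $X$ to the SDP of Theorem~\ref{thm:SDP_max} and compute $\langle A, X\rangle$. Since $G$ is $k$-regular with $k = \lambda_1 = 2m/n$, the all-ones vector $\mathbf 1$ is the Perron eigenvector, and the matrix $J/n$ is the orthogonal projection onto its span. A natural ansatz is to take $X$ to be an affine combination of $A$, $I$ and $J$, say $X = aA + bI + cJ$, chosen so that $0 \preccurlyeq X \preccurlyeq I$. Writing $X$ in the eigenbasis of $A$: on the Perron eigenvector it acts as $ak + b + cn$, and on an eigenvector for $\lambda_i$ ($i\ge 2$, which is orthogonal to $\mathbf 1$) it acts as $a\lambda_i + b$. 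The idea is to pick $a > 0$ so that the line $t \mapsto at + b$ maps the interval $[\lambda_n, \lambda_2]$ exactly onto $[0,1]$; this forces $a = 1/(\lambda_2 - \lambda_n)$ and $b = -\lambda_n/(\lambda_2-\lambda_n)$, so that every non-Perron eigenvalue of $X$ lies in $[0,1]$. Then choose $c$ so that the Perron eigenvalue $ak + b + cn$ also lands in $[0,1]$ — the cleanest choice is to set it equal to $1$, giving $cn = 1 - ak - b$, i.e. $c = \tfrac1n\big(1 - \tfrac{k - \lambda_n}{\lambda_2 - \lambda_n}\big) = \tfrac1n\cdot\tfrac{\lambda_2 - k}{\lambda_2 - \lambda_n}$. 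This $X$ is feasible.

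Next I would compute the objective value $\langle A, X\rangle = a\langle A,A\rangle + b\langle A,I\rangle + c\langle A,J\rangle = a\sum_i \lambda_i^2 + b\cdot 0 + c\cdot(\text{sum of all entries of }A)$. Using $\sum_i \lambda_i^2 = 2m$, $\operatorname{tr} A = 0$, and $\mathbf 1^\top A\mathbf 1 = 2m$ (the $k$-regularity giving row sums $k$, hence total $kn = 2m$), this becomes
\[
\langle A, X\rangle = \frac{2m}{\lambda_2 - \lambda_n} + \frac{1}{n}\cdot\frac{\lambda_2 - k}{\lambda_2 - \lambda_n}\cdot 2m.
\]
With $k = \lambda_1$, combining the two terms over the common denominator $\lambda_2 - \lambda_n$ gives $\dfrac{2m\big(1 + \tfrac{\lambda_2 - \lambda_1}{n}\big)}{\lambda_2 - \lambda_n} = \dfrac{2m n + 2m(\lambda_2 - \lambda_1)}{n(\lambda_2 - \lambda_n)}$. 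Since $2m = \lambda_1 n$ (from regularity), $2m n = \lambda_1 n^2$ and $2m(\lambda_2-\lambda_1) = \lambda_1 n(\lambda_2 - \lambda_1)$, so the numerator is $n\lambda_1(n + \lambda_2 - \lambda_1)$ — hmm, this needs reconciling with the claimed $2m - \lambda_1(\lambda_1-\lambda_2)$ form; substituting $2m = \lambda_1 n$ into the target expression $\frac{2m - \lambda_1(\lambda_1-\lambda_2)}{\lambda_2-\lambda_n}$ shows it equals $\frac{\lambda_1 n + \lambda_1(\lambda_2 - \lambda_1)}{\lambda_2 - \lambda_n} = \frac{\lambda_1(n + \lambda_2 - \lambda_1)}{\lambda_2-\lambda_n}$, which matches my numerator divided by $n$ — so the bookkeeping must be that $\langle A,J\rangle/n$ rather than $\langle A,J\rangle$ enters, i.e. I should double-check whether the projection normalization contributes a factor $1/n$; in any case the final identity $\langle A,X\rangle = \frac{2m - \lambda_1(\lambda_1-\lambda_2)}{\lambda_2-\lambda_n}$ should fall out after this routine simplification. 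Combined with Theorem~\ref{thm:SDP_max} this yields $\tfrac12\mathcal E(G) \ge \langle A, X\rangle = \frac{2m - \lambda_1(\lambda_1-\lambda_2)}{\lambda_2-\lambda_n}$.

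The main obstacle is verifying feasibility cleanly, in particular that $0 \preccurlyeq X \preccurlyeq I$ really holds for \emph{every} regular $G \not\cong K_n, \overline{K_n}$: one must check that $\lambda_2 > \lambda_n$ (so that $a$ is well-defined and positive — this fails only when $G$ has at most two distinct eigenvalues, i.e. $G$ is a disjoint union of complete graphs, which is why $K_n$ and $\overline{K_n}$, and more generally such degenerate cases, are excluded; I would need to argue that the hypothesis rules these out or handle them separately), and that the Perron value $ak+b+cn$ chosen as $1$ is consistent, which it is by construction. A secondary point is confirming $\lambda_2 - k \le 0$ is \emph{not} required for feasibility — indeed $c$ can have either sign since we only need the single Perron eigenvalue of $X$ to equal $1\in[0,1]$, and the $J$ term acts as zero on the rest of the space. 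I would also remark that this solution is exactly the one suggested by complementary slackness / the structure used in the proof of Theorem~\ref{thm:energy_n_minus_chi_f}, namely a "scaled complement"-type matrix, which explains the theorem's name.
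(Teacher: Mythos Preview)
Your proof is correct and is essentially the paper's own argument: the paper parametrizes the feasible matrix as $X = aA + b\overline{A} + cI$ while you use $aA + bI + cJ$, but since $\overline{A} = J - I - A$ these span the same subspace, and your chosen $X$ (with $a = 1/(\lambda_2-\lambda_n)$ and the Perron eigenvalue pinned to $1$) is exactly the same matrix the paper produces. Your bookkeeping hesitation is unnecessary --- $\langle A,X\rangle = 2m\,a + 2m\,c$ simplifies straight to the stated bound --- and for the well-definedness of $a$ note that a regular graph has $\lambda_2 = \lambda_n$ iff $G \in \{K_n,\overline{K_n}\}$ (e.g.\ $K_3 \cup K_3$ has $\lambda_2 = 2 > -1 = \lambda_n$, so disjoint unions of cliques are not the obstruction), which is precisely the excluded case.
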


Theorem \ref{thm:energy_regular_scaled_complement} can be seen as a refinement of a lower bound due to Nikiforov \cite[Eq.~(4)]{nikiforov2007energy} for the class of regular graphs. As an application of this result, we show that Conjecture \ref{con:energy_lb} holds for the class of Johnson graphs (Corollary \ref{cor:Johnson}).

The \emph{Schrijver theta number} of a graph $G$, denoted by $\vartheta^-(G)$, is defined as
\begin{equation*}
    \vartheta^-(G) = \max \{\langle J, X \rangle : X  \succcurlyeq 0, X \geq 0, \langle I, X \rangle = 1, X\circ A = 0\}.
\end{equation*}
Schrijver's theta number relates to the other parameters in this paper as follows (see e.g. \cite{lovasz1979shannon}):
\begin{equation*}
    \alpha(G) \leq \vartheta^-(G) \leq \chi_f(\overline{G}) \leq \chi(\overline{G}),
\end{equation*}
and it also holds that $\vartheta^-(G) \leq H(G)$.

An argument supporting Conjecture \ref{con:energy_lb} is that it holds for almost all graphs \cite{nikiforov2007energy}. However, the asymptotic behavior of the energy of almost all graphs is quite different from the known graph classes that meet equality in Conjecture \ref{con:energy_lb} (e.g. complete balanced multipartite graphs and the Cartesian product $K_n \square K_2$). One possible explanation for this discrepancy is that the known cases of equality exhibit a high degree of symmetry, in contrast to random graphs~\cite{erdos1963asymmetric}. Motivated by this observation, in the following result we improve Theorem \ref{thm:energy_n_minus_chi_f} and Theorem \ref{thm:energy_regular_scaled_complement} within the class of highly regular graphs, introduced by de Carli Silva et al.~\cite{de2019algebras}. This graph class includes the well-known vertex-transitive graphs, distance-regular graphs, and 1-walk-regular graphs.

\begin{theorem} \label{thm:highly_regular}
Let $G$ be an $n$-vertex highly regular graph, then
\begin{equation*}
    \frac{1}{2}  \mathcal{E}(G) \geq n-\vartheta^-(G).    
\end{equation*}
\end{theorem}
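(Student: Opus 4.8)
The plan is to feed a cleverly chosen feasible solution into the SDP of Theorem~\ref{thm:SDP_max}, built from an optimal solution of the Schrijver theta SDP. Recall that a highly regular graph $G$ has an associated coherent (Bose–Mesner-type) algebra $\mathcal{A}$ containing $I$, $J$ and $A$, and crucially the set of (normalized) idempotents of this algebra spans the space of matrices that are ``constant on the orbits'' relevant to $G$. Since $\vartheta^-(G)$ is defined by an SDP whose feasible region is invariant under the automorphism group (and, more relevantly here, under the coherent algebra), a standard symmetrization/averaging argument shows there is an optimal solution $X^\star$ to the Schrijver SDP that lies in $\mathcal{A}$. So I would first record this: \emph{for highly regular $G$, the Schrijver theta number is attained by some $X^\star \in \mathcal{A}$ with $X^\star \succcurlyeq 0$, $X^\star \geq 0$, $\langle I, X^\star\rangle = 1$, $X^\star \circ A = 0$, and $\langle J, X^\star\rangle = \vartheta^-(G)$.}

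Next I would build the candidate feasible point for SDP~\eqref{eq:SDP_max}. The model to imitate is the proof of Theorem~\ref{thm:energy_n_minus_chi_f}: there the matrix $\tfrac1n J$ (which is feasible when $G$ is vertex-transitive, or more generally when one uses the fractional clique cover) gives $\langle A, \tfrac1n J\rangle = 2m/n$, and one ``corrects'' it by adding a PSD piece supported where $A$ has positive entries, staying below $I$. Here the natural correction is to use $X = cJ + D$, where $D$ is a suitable positive-semidefinite combination built from $X^\star$ (or from $I - n X^\star$ projected appropriately), chosen so that $\langle A, X\rangle$ picks up exactly the ``$-\vartheta^-(G)$'' term. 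Concretely, since $X^\star \in \mathcal{A}$ commutes with $A$ and with $J$, everything can be diagonalized simultaneously: writing $A$ and $X^\star$ in the common eigenbasis (the eigenspaces $V_0 = \langle \mathbf 1\rangle, V_1, \dots, V_d$ of $\mathcal{A}$), the constraints $I \succcurlyeq X \succcurlyeq 0$ and the objective $\langle A, X\rangle$ both decouple eigenspace by eigenspace. The task reduces to choosing, on each eigenspace $V_j$ with eigenvalue $\lambda^{(j)}$ of $A$ and $\mu^{(j)} \geq 0$ of $X^\star$, a coefficient $x_j \in [0,1]$ to maximize $\sum_j m_j \lambda^{(j)} x_j$ subject to the single linear coupling coming from $\langle I, X^\star\rangle = 1$ and $X^\star \circ A = 0$ (the latter forces $\sum_j m_j \lambda^{(j)} \mu^{(j)} = \langle A, X^\star\rangle = 0$, since $X^\star \circ A = 0$ means $\langle A, X^\star\rangle = 0$).

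The heart of the argument — and the step I expect to be the main obstacle — is then a purely scalar optimization lemma: given the eigenvalue data of $A$ together with the numbers $\mu^{(j)} \geq 0$ summing (weighted by multiplicities) to $1$ and satisfying $\sum m_j \lambda^{(j)} \mu^{(j)} = 0$, show that the optimal choice of $x_j \in [0,1]$ yields objective value at least $n - \vartheta^-(G) = n - \langle J, X^\star\rangle = n - n^2\mu^{(0)}$ (using $X^\star \in \mathcal{A}$, the $V_0$-component of $X^\star$ is $\mu^{(0)} J$ with $n^2\mu^{(0)} = \langle J, X^\star\rangle$ up to the normalization $n\mu^{(0)}\cdot n$). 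The natural guess is $x_j = 1$ for every $j$ with $\lambda^{(j)} > 0$, which gives exactly $\tfrac12\mathcal{E}(G)$ — but that already proves nothing new, so instead one takes $x_j = 1$ for $j \neq 0$ and $x_0 = $ something like $1 - n\mu^{(0)}$ adjusted, and then has to show $\langle A, X\rangle = \sum_{j \neq 0, \lambda^{(j)}>0} m_j\lambda^{(j)} + (\text{small}) \geq n - \vartheta^-$. I would push this through by comparing term-by-term against the decomposition $n = \sum_j m_j$ and using $\lambda^{(0)} = \lambda_1 = 2m/n$, the nonnegativity $\mu^{(j)} \ge 0$, and the constraint $\sum m_j \lambda^{(j)}\mu^{(j)} = 0$ to control the negative eigenspaces; the key inequality to verify is that the ``mass'' $1 - n\mu^{(0)}$ that the theta constraint forces onto the non-trivial eigenspaces is, when weighted appropriately, enough to cover $\sum_{\lambda^{(j)} < 0}|\lambda^{(j)}| m_j - (\text{slack})$. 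If this scalar inequality proves stubborn in full generality, a fallback is to invoke $\vartheta^-(G) \le H(G)$ together with Theorem~\ref{thm:energy_n_minus_ratio} on the regular case and handle the gap between $\vartheta^-$ and $H$ via the extra structure (multiple eigenspaces) available for highly regular graphs, i.e. interpolate between the Theorem~\ref{thm:energy_regular_scaled_complement} construction and the theta-optimal one. Finally I would check the degenerate cases ($G \cong K_n$, $\overline{K_n}$, or disconnected highly regular graphs) separately, since those are exactly where the SDP solution collapses.
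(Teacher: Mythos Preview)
Your proposal has a genuine gap. You correctly identify that the feasible solution for SDP~\eqref{eq:SDP_max} should come from a theta-type optimal matrix lying in the algebra $\mathcal{A}$, and that simultaneous diagonalization reduces matters to a scalar problem. But the scalar problem you arrive at is never solved: the obvious choice $x_j = \mathbf{1}_{\{\lambda^{(j)}>0\}}$ just reproduces $\tfrac12\mathcal E(G)$ (as you note), and the alternative you sketch --- tweaking only $x_0$ and hoping the constraints $\mu^{(j)} \ge 0$, $\sum m_j\lambda^{(j)}\mu^{(j)} = 0$ will somehow control the negative eigenspaces --- is not an argument. Your own phrasing (``the main obstacle'', ``if this scalar inequality proves stubborn'', ``a fallback is to interpolate'') concedes that you do not have a proof.

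The paper sidesteps this by switching to the \emph{Szegedy} number $\vartheta^+$ of the \emph{complement}. Take $Y \in \mathcal{A}$ optimal for $\vartheta^+(\overline G)$ (so $Y \succcurlyeq 0$, $\langle I, Y\rangle = 1$, $Y \circ \overline A \le 0$, $\langle J, Y\rangle = \vartheta^+(\overline G)$) and set $X := \vartheta^-(G)\,Y$. The one substantive lemma is an eigenvalue bound: $\lambda_1(Y) \le 1/\vartheta^-(G)$, which makes $X$ feasible for~\eqref{eq:SDP_max}; this is proved by contradiction, since if $\lambda_1(Y)$ were larger one could Schur-multiply $Y$ by its rescaled top eigenprojector (plus a multiple of $J$ to keep the entrywise sign condition) and produce a strictly better feasible point for $\vartheta^+(\overline G)$. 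Feasibility in hand, the objective is immediate:
\[
\langle A, X\rangle = \vartheta^-(G)\bigl(\langle J, Y\rangle - \langle \overline A, Y\rangle - 1\bigr) \ \ge\ \vartheta^-(G)\,\vartheta^+(\overline G) - \vartheta^-(G) \ = \ n - \vartheta^-(G),
\]
using $Y \circ \overline A \le 0$ and the product formula $\vartheta^-(G)\,\vartheta^+(\overline G) = n$ for highly regular graphs. The move you did not find is to work on the complement side: there the constraint $Y \circ \overline A \le 0$ becomes a \emph{bonus} in the inner product with $A = J - I - \overline A$, whereas your $X^\star \circ A = 0$ contributes nothing to $\langle A, X^\star\rangle$ and leaves you with only the trace and $J$-terms to juggle.
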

As mentioned before, $\vartheta^-(G)$ is the closest to $\alpha(G)$ of all graph parameters we consider, in particular the inequality from Theorem \ref{thm:highly_regular} is stronger than those of Theorems \ref{thm:energy_n_minus_chi_f} and \ref{thm:energy_n_minus_ratio}.

This paper is structured as follows. In Section \ref{sec:SDP_formulation} we provide the SDP formulation of the graph energy from Theorem \ref{thm:SDP_max}. Next, in Section \ref{sec:frac_chrom} we show a bound on the graph energy in terms of a decomposition (Lemma \ref{lem:convex_comb_orth_matrices}), and use this result to prove Theorem \ref{thm:energy_n_minus_chi_f}. Then, in Section \ref{sec:adjacency_structure}, we look at a restricted version of SDP \eqref{eq:SDP_max}, and use its optimal solution to prove Theorems \ref{thm:energy_n_minus_ratio} and \ref{thm:energy_regular_scaled_complement}. In Section \ref{sec:high_reg}, we look at highly regular graphs and prove Theorem \ref{thm:highly_regular}. Finally, in Section \ref{sec:concludingremarks} we conclude and discuss a few open questions.

\section{SDP formulation for the graph energy}\label{sec:SDP_formulation}

    Throughout this section, let $G$ be a $n$ vertex graph with adjacency matrix $A$ and eigenvalues $\lambda_1 \geq \cdots \geq \lambda_n$.
    
    Our approach to Conjecture \ref{con:energy_lb} will be through a semidefinite program formulation of the energy of a graph. A \emph{semidefinite program} (SDP) is a generalization of a linear program, where the inequality constraints are replaced by positive semidefinite constraints. In particular, we are interested in the following two SDPs.

    \begin{equation}\label{eq:SDP_max}
        \begin{aligned}
        \text{maximize}  \ \ &\langle A,X \rangle \\
        \text{subject to}\ \ &X  \succcurlyeq 0, \\
        &I-X \succcurlyeq 0.
        \end{aligned}
    \end{equation}
    
    \begin{equation}\label{eq:SDP_min}
        \begin{aligned}
        \text{minimize}  \ \ &\langle I, Y \rangle \\
        \text{subject to}\ \ & Y \succcurlyeq 0, \\
        &Y-A \succcurlyeq 0.
        \end{aligned}
    \end{equation}

    \begin{proof}[Proof of Theorem \ref{thm:SDP_max}]
        We will prove this theorem by showing that any feasible solution for \eqref{eq:SDP_min} provides an upper bound on any feasible solution of \eqref{eq:SDP_max}. We then present one feasible solution for each, both having objective value $\frac12\mathcal{E}(G)$. 
        Let $X,Y$ be solutions for \eqref{eq:SDP_max} and \eqref{eq:SDP_min} respectively. Since $I-X,Y-A \succcurlyeq 0$, we have
        \begin{equation*}
            \langle X, Y-A \rangle \geq 0 \hspace{5mm} \text{ and } \hspace{5mm}\langle I-X,Y \rangle \geq 0.
        \end{equation*}
        This implies
        \begin{equation*}
            \langle A, X\rangle \leq \langle Y,X \rangle \leq \langle Y, I \rangle.
        \end{equation*}

        We now note that $X=\sum_{\lambda_i > 0} E_i$ is a feasible solution of \eqref{eq:SDP_max} and $Y= \sum_{\lambda_i > 0} \lambda_i E_i$ is a feasible solution of \eqref{eq:SDP_min} and these solutions satisfy $\langle A, X\rangle = \sum_{\lambda_i>0} \lambda_i = \langle I,Y\rangle$.
    \end{proof}
    
    The important takeaway from this, is that any feasible $X$ of SDP \eqref{eq:SDP_max} satisfies
    \begin{equation*}
        \frac{1}{2}\mathcal{E}(G) = \sum_{\lambda_i > 0} \lambda_i \geq \langle A,X \rangle.
    \end{equation*}
    Thus, if we can find a matrix $X$ with eigenvalues between $0$ and $1$ for which $\langle A,X \rangle = n-\alpha(G)$, then this would prove Conjecture \ref{con:energy_lb}.

    In the next sections we present different approaches for finding feasible matrices for SDP \eqref{eq:SDP_max}. Although we are not capable of proving the full conjecture, each result can be seen as a new lower bound for the graph energy that improves on existing bounds in the literature by Hoffman \cite{hoffman2003eigenvalues} and Nikiforov \cite{nikiforov2007energy}, and thus they are of independent interest.

\section{Graph decomposition and fractional chromatic number}\label{sec:frac_chrom}

In 1970, Hoffman \cite{hoffman2003eigenvalues} proved the following result (also present in \cite{thompson1975singular}).
\begin{lemma}[\cite{hoffman2003eigenvalues}]\label{lem:Hoffman_energy}
    Let $G=(V,E)$ be a graph and let $S_1, \dots, S_t \subseteq V$ be a partition of $V$. Then,
    \begin{equation*}
        \mathcal{E}(G) \geq \sum_i \mathcal{E}(G[S_i]).
    \end{equation*}
\end{lemma}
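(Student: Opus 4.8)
The plan is to interpret the energy $\mathcal{E}(G)$ as the nuclear (trace) norm $\|A\|_{*}=\sum_i|\lambda_i|$ of the adjacency matrix --- equivalently the sum of the singular values of $A$, since $A$ is symmetric --- and then to exploit the two standard facts that the nuclear norm is a norm (so it obeys the triangle inequality) and is invariant under conjugation by orthogonal (more generally unitary) matrices, $\|UAU^{*}\|_{*}=\|A\|_{*}$, which is immediate from the singular-value description. With the vertices ordered according to the partition $S_1,\dots,S_t$, write $A$ in block form with blocks $A_{ij}$, so that $A_{ii}=A(G[S_i])$, and let $B$ be the matrix obtained from $A$ by zeroing out every off-diagonal block $A_{ij}$, $i\neq j$; that is, $B=\bigoplus_i A(G[S_i])$. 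Since the nuclear norm of a block-diagonal matrix is the sum of the nuclear norms of its blocks, $\|B\|_{*}=\sum_i\mathcal{E}(G[S_i])$, and the lemma reduces to the single inequality $\|A\|_{*}\ge\|B\|_{*}$.

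The heart of the matter is that $B$ is a \emph{pinching} of $A$, i.e.\ an average of orthogonal conjugates of $A$, and pinching never increases the trace norm. I would first treat $t=2$: taking $D=\operatorname{diag}(I_{S_1},-I_{S_2})$ one checks $\tfrac12(A+DAD)=B$, and since $D$ is orthogonal, $\|DAD\|_{*}=\|A\|_{*}$, so the triangle inequality gives $\|B\|_{*}\le\tfrac12\|A\|_{*}+\tfrac12\|DAD\|_{*}=\|A\|_{*}$. This already yields $\mathcal{E}(G)\ge\mathcal{E}(G[S_1])+\mathcal{E}(G-S_1)$. For general $t$ I would then induct on $t$: apply the two-block case to the partition $\{S_1,\,V\setminus S_1\}$ and afterwards apply the inductive hypothesis to $G-S_1$ with the partition $S_2,\dots,S_t$, with the base case $t=1$ being trivial. (Alternatively, and without any induction, one can write $B=\tfrac1t\sum_{k=0}^{t-1}D_kAD_k^{*}$ directly, where $D_k$ is the diagonal unitary equal to $e^{2\pi i kj/t}$ on the block indexed by $S_j$; the roots-of-unity cancellation annihilates precisely the off-diagonal blocks, and the same unitary-invariance plus triangle-inequality argument applies, now over $\mathbb{C}$.)

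I do not anticipate a genuine obstacle: the only real idea is recognizing the block-diagonal part $B$ as a pinching of $A$ and hence a trace-norm contraction, after which the argument is essentially bookkeeping. The two points I would state with care are the unitary invariance $\|UAU^{*}\|_{*}=\|A\|_{*}$ and the additivity $\|B_1\oplus B_2\|_{*}=\|B_1\|_{*}+\|B_2\|_{*}$ of the nuclear norm over direct sums; both are elementary consequences of the singular-value characterization of $\|\cdot\|_{*}$ and can be dispatched in a line or cited.
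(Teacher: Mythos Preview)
Your argument is correct. Identifying $\mathcal{E}(G)$ with the nuclear norm $\|A\|_*$, expressing the block-diagonal part $B=\bigoplus_i A(G[S_i])$ as a pinching $\tfrac12(A+DAD)$ (or $\tfrac1t\sum_k D_kAD_k^*$ with roots of unity), and invoking unitary invariance plus the triangle inequality is a clean and standard route; the induction is handled properly and the alternative one-shot argument is also fine.

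The paper does not actually prove this lemma---it is quoted from Hoffman---but the paper's machinery does recover it as the special case $y_i=1$ of Lemma~\ref{lem:fractional_subgraph_covering}, which in turn is derived from Lemma~\ref{lem:convex_comb_orth_matrices} via the SDP characterisation $\mathcal{E}^+(M)=\max\{\langle M,X\rangle:0\preccurlyeq X\preccurlyeq I\}$. There, one takes the optimal $X_i$ for each $Q_i^TMQ_i$ and assembles $X=\sum_i y_iQ_iX_iQ_i^T$ as a feasible point for the SDP of $M$. So the two proofs are genuinely different in spirit: yours is a direct matrix-analytic contraction argument (pinching never increases any unitarily invariant norm), while the paper's is a primal SDP construction. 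Your approach is more elementary and self-contained for the partition case; the paper's approach has the advantage that it immediately yields the fractional strengthening $\mathcal{E}(G)\ge\sum_i y_i\,\mathcal{E}(G[S_i])$ for any fractional cover $\sum_{S_i\ni v}y_i=1$, which is exactly what is needed for Theorem~\ref{thm:energy_n_minus_chi_f}. Extending your pinching argument to that fractional setting is not immediate, since the sets $S_i$ may overlap and a simple average of unitary conjugates no longer produces the right object.
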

Recall that the parameter $\chi(\overline{G})$ is the smallest number of cliques required to cover $V$. Using the fact that the energy of a complete graph is $\mathcal{E}(K_k) = 2(k-1)$ and taking $\chi(\overline{G})$ disjoint cliques, Lemma \ref{lem:Hoffman_energy} implies that
\begin{equation} \label{eq:energy_chi}
    \frac{1}{2}\mathcal{E}(G) \geq \sum_i (|S_i|-1) = n - \chi(\overline{G}).
\end{equation}

In order to improve inequality \eqref{eq:energy_chi}, we generalize Lemma \ref{lem:Hoffman_energy} as follows. 

\begin{lemma} \label{lem:convex_comb_orth_matrices}
    Let $M$ be a symmetric $n\times n$ matrix. Let $Q_1, \dots, Q_t$ be a sequence of real matrices, each $Q_i$ of dimension $n \times k_i$. Let $y_1, \dots, y_t$ be a sequence of non-negative real numbers such that $\sum_{i=1}^t y_i Q_i Q_i^T = I$. Then
    \begin{equation*}
        \mathcal{E}(M) \geq \sum_{i=1}^t y_i \mathcal{E}\left(Q_i^T M Q_i \right).
    \end{equation*}
\end{lemma}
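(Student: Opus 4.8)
The plan is to express the energy of $M$ as the value of the SDP from Theorem~\ref{thm:SDP_max} (in the two-sided form $\tfrac12\mathcal{E}(M) = \max\{\langle M, X\rangle : I \succcurlyeq X \succcurlyeq 0\}$, which extends verbatim to arbitrary symmetric $M$), and then to build a single feasible $X$ for $M$ out of near-optimal feasible solutions for each $Q_i^T M Q_i$. Concretely, for each $i$ let $X_i$ be a feasible solution of the SDP associated to the $k_i \times k_i$ matrix $Q_i^T M Q_i$, so $I_{k_i} \succcurlyeq X_i \succcurlyeq 0$ and $\langle Q_i^T M Q_i, X_i\rangle = \tfrac12\mathcal{E}(Q_i^T M Q_i)$ (take $X_i = \sum_{\mu_j > 0} F_j$ over the positive eigenprojections of $Q_i^T M Q_i$, exactly as in the proof of Theorem~\ref{thm:SDP_max}). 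Set
\begin{equation*}
    X := \sum_{i=1}^t y_i\, Q_i X_i Q_i^T.
\end{equation*}

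First I would check feasibility of $X$. Positive semidefiniteness is immediate: each $Q_i X_i Q_i^T \succcurlyeq 0$ since $X_i \succcurlyeq 0$, and the $y_i$ are non-negative. For the upper bound $X \preccurlyeq I$, note that $X_i \preccurlyeq I_{k_i}$ gives $Q_i X_i Q_i^T \preccurlyeq Q_i Q_i^T$, hence
\begin{equation*}
    X = \sum_{i=1}^t y_i\, Q_i X_i Q_i^T \preccurlyeq \sum_{i=1}^t y_i\, Q_i Q_i^T = I,
\end{equation*}
using the hypothesis $\sum_i y_i Q_i Q_i^T = I$. Next I would compute the objective value: by linearity of $\langle \cdot, \cdot\rangle$ and the cyclic/adjoint identity $\langle M, Q_i X_i Q_i^T\rangle = \langle Q_i^T M Q_i, X_i\rangle$,
\begin{equation*}
    \langle M, X\rangle = \sum_{i=1}^t y_i\, \langle Q_i^T M Q_i, X_i\rangle = \sum_{i=1}^t y_i \cdot \tfrac12 \mathcal{E}(Q_i^T M Q_i).
\end{equation*}
Since $X$ is feasible, Theorem~\ref{thm:SDP_max} (applied to $M$) yields $\tfrac12\mathcal{E}(M) \geq \langle M, X\rangle = \tfrac12\sum_i y_i \mathcal{E}(Q_i^T M Q_i)$, which is the claim after multiplying by $2$.

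The only genuinely delicate point is making sure Theorem~\ref{thm:SDP_max} is being invoked in the right generality: as stated it concerns an adjacency matrix $A$, but its proof uses nothing about $A$ beyond symmetry (the construction $X = \sum_{\lambda_i>0} E_i$, $Y = \sum_{\lambda_i>0}\lambda_i E_i$ works for any real symmetric matrix and gives $\max\{\langle M,X\rangle : I\succcurlyeq X\succcurlyeq 0\} = \sum_{\mu_i>0}\mu_i = \tfrac12\mathcal{E}(M)$). I would state this extension explicitly as a one-line remark before the proof, so that it can be applied both to $M$ itself and — implicitly through the choice of $X_i$ — to each compression $Q_i^T M Q_i$. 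Everything else is routine linear algebra; there is no real obstacle, the lemma is essentially "SDP weak duality plus the fact that the feasible region behaves well under the substitution $X \mapsto Q X Q^T$." Finally I would note that Lemma~\ref{lem:Hoffman_energy} is recovered by taking the $Q_i$ to be the $0/1$ inclusion matrices of a partition $S_1,\dots,S_t$ and all $y_i = 1$, since then $\sum_i Q_i Q_i^T = I$ and $Q_i^T M Q_i = M[S_i]$.
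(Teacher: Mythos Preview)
Your construction of $X = \sum_i y_i Q_i X_i Q_i^T$ and the verification of feasibility and objective value are exactly what the paper does. There is, however, one incorrect claim in your extension of Theorem~\ref{thm:SDP_max}: for a general symmetric matrix $M$, the SDP $\max\{\langle M, X\rangle : I \succcurlyeq X \succcurlyeq 0\}$ equals $\mathcal{E}^+(M) := \sum_{\mu_i > 0} \mu_i$, which is \emph{not} $\tfrac12\mathcal{E}(M)$ unless $\langle M, I\rangle = 0$; in general $\mathcal{E}^+(M) = \tfrac12\bigl(\mathcal{E}(M) + \langle M, I\rangle\bigr)$. So as written your argument only proves $\mathcal{E}^+(M) \geq \sum_i y_i\, \mathcal{E}^+(Q_i^T M Q_i)$, and the compressions $Q_i^T M Q_i$ need not be traceless even when $M$ is.

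The paper closes this gap with one more line: from $\sum_i y_i Q_i Q_i^T = I$ and the same cyclic identity you already use, one gets $\langle M, I\rangle = \sum_i y_i \langle Q_i^T M Q_i, I\rangle$, so the trace corrections on both sides cancel when passing from $\mathcal{E}^+$ to $\mathcal{E}$. After inserting this step your proof coincides with the paper's.
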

\begin{proof}
    Let $\lambda_1 \geq \cdots \geq  \lambda_n$ be the eigenvalues of $M$. Denote by $\mathcal{E}^+(M) = \sum_{\lambda_i > 0} \lambda_i$ the positive energy of $M$. Analogously to Theorem \ref{thm:SDP_max}, we can write $\mathcal{E}^+(M)$ as an SDP:
    \begin{equation}\label{eq:SDP_max_general_M}
        \mathcal{E}^+(M) = \max \{\langle M, X \rangle : I \succcurlyeq X \succcurlyeq 0\}.
    \end{equation}

    Let $X_i$ be the optimal solution for SDP \eqref{eq:SDP_max_general_M} for the matrix $Q_i^T M Q_i$. That is, $X_i$ is a real $k_i \times k_i$ matrix such that $I_{k_i} \succcurlyeq X_i \succcurlyeq 0$ and $\langle Q_i^T M Q_i, X_i \rangle = \mathcal{E}^+(Q_i^T M Q_i)$. 

    Define $X := \sum_{i} y_i Q_i X_i Q_i^T$. We claim that $X$ is feasible for SDP \eqref{eq:SDP_max_general_M} with objective value $\sum_{i} y_i \mathcal{E}^+\left(Q_i^T M Q_i \right)$. Indeed, since $X$ is the sum of positive semidefinite matrices, $X$ itself is positive semidefinite. Moreover, 
     \begin{equation*}
        I - X = \sum_{i} y_i Q_iQ_i^T - \sum_{i} y_i Q_iX_iQ_i^T = \sum_{i} y_i Q_i(I_{k_i} - X_i)Q_i^T \succcurlyeq 0,
     \end{equation*}
    so $X$ is feasible. Now we compute the objective value:
    \begin{equation*}
        \langle M, X \rangle = \sum_{i} y_i \langle M, Q_i X_i Q_i^T \rangle = \sum_{i} y_i \langle Q_i^T M Q_i, X_i\rangle = \sum_{i} y_i \mathcal{E}^+\left(Q_i^T M Q_i\right).
    \end{equation*}
    Since $\langle M, X\rangle$ is a lower bound for $\mathcal{E}^+(M)$, we conclude that 
    \begin{equation*}
         \mathcal{E}^+(M) \geq \sum_{i} y_i \mathcal{E}^+\left(Q_i^T M Q_i \right).
    \end{equation*}
    Finally, in order to achieve our desired bound for the energy, we note that $\mathcal{E}^+(M) = \frac{1}{2}(\mathcal{E}(M)+ \langle M, I\rangle)$ and $\langle M, I\rangle = \sum_i y_i \left\langle Q_i^TMQ_i, I\right\rangle$.  
\end{proof}

The applicability of the Lemma \ref{lem:convex_comb_orth_matrices} consists of correctly choosing matrices $Q_i$. In the case where each $Q_i$ is restricted to be a $0/1$-matrix, we get the following combinatorial generalization of Lemma \ref{lem:Hoffman_energy}.

\begin{lemma}
    \label{lem:fractional_subgraph_covering}
    Let $G = (V, E)$ be a graph, let $S_1, \dots, S_t \subseteq V$ and let $y_i \geq 0$ be real numbers satisfying $\sum_{S_i \ni v} y_i = 1 $ for all $v \in V$. Then,
    \begin{equation*}
        \mathcal{E}(G) \geq \sum_{i} y_i \mathcal{E}\left(G[S_i] \right).
    \end{equation*}
\end{lemma}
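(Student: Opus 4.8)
The plan is to derive Lemma~\ref{lem:fractional_subgraph_covering} as a direct specialization of Lemma~\ref{lem:convex_comb_orth_matrices}. The key observation is that a subset $S_i \subseteq V$ together with a nonnegative weight $y_i$ naturally corresponds to a choice of matrix $Q_i$ in the statement of Lemma~\ref{lem:convex_comb_orth_matrices}: take $Q_i$ to be the $n \times |S_i|$ submatrix of the identity consisting of the columns indexed by $S_i$ (equivalently, the vertex--$S_i$ incidence matrix, whose $(v,j)$ entry is $1$ if $v$ is the $j$th element of $S_i$ and $0$ otherwise). With this choice, $Q_i Q_i^T$ is the diagonal $0/1$ matrix whose $v$th diagonal entry is $1$ exactly when $v \in S_i$, so
\begin{equation*}
    \sum_{i=1}^t y_i Q_i Q_i^T = \operatorname{diag}\!\left( \sum_{S_i \ni v} y_i \right)_{v \in V} = \operatorname{diag}(1,\dots,1) = I,
\end{equation*}
where the middle equality uses precisely the hypothesis $\sum_{S_i \ni v} y_i = 1$ for all $v$. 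Thus the $Q_i$ and $y_i$ satisfy the resolution-of-identity condition required by Lemma~\ref{lem:convex_comb_orth_matrices}.

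Next I would identify the matrices appearing in the conclusion. Applying Lemma~\ref{lem:convex_comb_orth_matrices} with $M = A$, the adjacency matrix of $G$, we get $\mathcal{E}(A) \geq \sum_i y_i \, \mathcal{E}(Q_i^T A Q_i)$. It remains to recognize $Q_i^T A Q_i$ as the adjacency matrix of the induced subgraph $G[S_i]$: for the column-selection matrix $Q_i$, the product $Q_i^T A Q_i$ is exactly the principal submatrix of $A$ with rows and columns in $S_i$, which by definition is $A(G[S_i])$. Hence $\mathcal{E}(Q_i^T A Q_i) = \mathcal{E}(G[S_i])$ and $\mathcal{E}(A) = \mathcal{E}(G)$, giving the claimed inequality
\begin{equation*}
    \mathcal{E}(G) \geq \sum_i y_i \, \mathcal{E}(G[S_i]).
\end{equation*}

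I do not anticipate a serious obstacle here; the proof is essentially a bookkeeping argument once the correspondence $S_i \leftrightarrow Q_i$ is set up. The only point requiring a little care is making the identification of $Q_i^T A Q_i$ with $A(G[S_i])$ clean — in particular being explicit that $Q_i$ has full column rank and that its columns are distinct standard basis vectors, so that no vertex is "used twice" within a single $S_i$ and the principal-submatrix interpretation is literally correct. One should also note in passing that the $S_i$ need not be distinct as sets (repeated sets with their weights simply add), which is what makes the fractional covering condition the natural hypothesis. With these remarks the lemma follows immediately, and it visibly recovers Lemma~\ref{lem:Hoffman_energy} in the special case where the $S_i$ form a partition and all $y_i = 1$.
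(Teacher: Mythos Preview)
Your proposal is correct and follows essentially the same approach as the paper: both define $Q_i$ as the $n \times |S_i|$ column-selection (incidence) matrix, observe that $Q_iQ_i^T = \operatorname{Diag}(e_{S_i})$ so that the covering hypothesis yields $\sum_i y_i Q_iQ_i^T = I$, and then invoke Lemma~\ref{lem:convex_comb_orth_matrices} with $M = A$. Your additional remarks about $Q_i^T A Q_i$ being the principal submatrix $A(G[S_i])$ and about recovering Lemma~\ref{lem:Hoffman_energy} are accurate and simply make explicit what the paper leaves implicit.
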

\begin{proof}
    Let $Q_i$ be the $0/1$-matrix with rows indexed by $V$ and columns indexed by $S_i$, with entry equal to $1$ exactly when the vertices coincide.
    
    Let $e_{S_i}$ be the $0/1$-indicator vector for the set $S_i$. Note that $Q_i^T Q_i = I_{k_i}$ and that $Q_iQ_i^T = \text{Diag}(e_{S_i})$. Therefore, we have that $\sum_{i} y_i Q_iQ_i^T = I$ and the result follows from applying Lemma \ref{lem:convex_comb_orth_matrices}.
\end{proof}

Let $\mathcal{C} \subset \mathcal{P}(V)$ be the set of cliques of $G$. We define the $n \times \abs{\mathcal{C}}$ matrix $M$ by $M_{iU} = 1$ if $v_i \in U$, and $0$ otherwise. The fractional chromatic number of $\overline{G}$, denoted by $\chi_f(\overline{G})$ is defined as
\begin{equation}\label{eq:chif_opt}
\chi_f(\overline{G}) = \min \{\langle\mathbf{1}, y\rangle : My = \mathbf{1}, y \geq 0 \}.
\end{equation}

Using Lemma \ref{lem:fractional_subgraph_covering}, we prove Theorem \ref{thm:energy_n_minus_chi_f}.
\begin{proof}[Proof of Theorem \ref{thm:energy_n_minus_chi_f}]
    Let $\mathcal{C} = \{C_1, \dots, C_t\}$ be the set of cliques of $G$ and let $y$ be a optimal solution for SDP \eqref{eq:chif_opt}, that is, $y$ satisfies $\sum_i y_i = \chi_f(\overline{G})$ and $\sum_{C_i \ni v} y_i= 1$ for every vertex $v$. By Lemma \ref{lem:fractional_subgraph_covering} and using the fact that $\mathcal{E}(K_k) = 2(k-1)$, we have that
    \begin{equation*}
    \frac{1}{2}\mathcal{E}(G) \geq \sum_{i} \frac{y_i}{2} \mathcal{E}(G[C_i]) = \sum_iy_i(|C_i| - 1) = n-\chi_f(\overline{G}). \qedhere
    \end{equation*}
\end{proof}

We end this section by mentioning another application of Lemma \ref{lem:fractional_subgraph_covering}.

\begin{corollary}\label{cor:hyper_energetic_vertex_transitive}
    Let $G = (V, E)$ be a vertex transitive graph. If there exists a set $S \subseteq V$ such that $\mathcal{E}(G[S]) \geq c|S|$, for some $c > 0$, then
    \begin{equation*}
        \mathcal{E}(G) \geq cn.
    \end{equation*}
\end{corollary}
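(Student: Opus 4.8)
The plan is to feed Lemma \ref{lem:fractional_subgraph_covering} the orbit of $S$ under the automorphism group. Write $\Gamma = \operatorname{Aut}(G)$, and for $\sigma \in \Gamma$ set $S_\sigma := \sigma(S)$. Since each $\sigma$ is a graph automorphism, $G[S_\sigma] \cong G[S]$, so $\mathcal{E}(G[S_\sigma]) = \mathcal{E}(G[S]) \geq c|S|$ for every $\sigma \in \Gamma$. Thus every member of the family $\{S_\sigma : \sigma \in \Gamma\}$ carries the same energy lower bound, and it only remains to choose feasible weights.

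Next I would verify that the \emph{uniform} weighting works, and this is exactly where vertex-transitivity enters. Fix $v \in V$ and consider $d(v) := |\{\sigma \in \Gamma : v \in S_\sigma\}|$. Double counting the pairs $(\sigma, w)$ with $w \in S_\sigma$ gives $\sum_{\sigma \in \Gamma}|S_\sigma| = |\Gamma|\,|S| = \sum_{w \in V} d(w)$; since $\Gamma$ acts transitively on $V$, the quantity $d(w)$ is the same for all $w$, hence $d(v) = |\Gamma|\,|S|/n$ for every $v$. Therefore, setting $y_\sigma := \dfrac{n}{|\Gamma|\,|S|}$ for each $\sigma \in \Gamma$ yields $\sum_{S_\sigma \ni v} y_\sigma = 1$ for all $v \in V$, which is precisely the hypothesis of Lemma \ref{lem:fractional_subgraph_covering}. (If several images $S_\sigma$ coincide one simply merges them and adds the corresponding weights; this changes nothing.) Note $\Gamma$ is finite, so all sums are finite and the $y_\sigma$ are well-defined positive numbers.

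Applying Lemma \ref{lem:fractional_subgraph_covering} with this family and these weights, the computation then reads
\[
\mathcal{E}(G) \;\geq\; \sum_{\sigma \in \Gamma} y_\sigma\, \mathcal{E}(G[S_\sigma]) \;=\; \frac{n}{|\Gamma|\,|S|}\sum_{\sigma \in \Gamma}\mathcal{E}(G[S]) \;=\; \frac{n}{|S|}\,\mathcal{E}(G[S]) \;\geq\; \frac{n}{|S|}\cdot c|S| \;=\; cn,
\]
which is the claim. There is essentially no obstacle here beyond making the averaging precise: the entire content is that vertex-transitivity forces each vertex to be covered equally often by the orbit of $S$, so constant weights are feasible, and then Lemma \ref{lem:fractional_subgraph_covering} does the rest.
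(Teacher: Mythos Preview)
Your proof is correct and follows essentially the same approach as the paper: both feed Lemma~\ref{lem:fractional_subgraph_covering} a family of subsets each inducing a copy of $G[S]$, use vertex-transitivity to argue that every vertex lies in the same number of these copies, and thus that uniform weights are feasible. The only cosmetic difference is that the paper takes as its family \emph{all} subsets inducing a copy of $H\cong G[S]$, while you take the orbit $\{\sigma(S):\sigma\in\operatorname{Aut}(G)\}$; both choices work and the double-counting step is identical.
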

\begin{proof}
    Let $H \cong G[S]$ and let $\mathcal{H} \subseteq \mathcal{P}(V)$ denote the set of vertices that induce a copy of $H$ in $G$. Since $G$ is vertex transitive, every vertex belongs to the same number of copies of $H$, say $c_H$. We apply Lemma \ref{lem:fractional_subgraph_covering} and obtain
    \begin{equation*}
        \mathcal{E}(G) \geq \frac{1}{c_H} \sum_{S_i \in \mathcal{H}} \mathcal{E}(G[S_i]) = \frac{|\mathcal{H}|}{c_H} \mathcal{E}(H) = \frac{n}{|S|} \mathcal{E}(H) \geq cn,
    \end{equation*}
    where the last equality follows from double counting the pairs $(v, S_i)$, with $S_i \in \mathcal{H}$ and $v \in S_i$.
\end{proof}

\section{Adjacency structure and Hoffman's ratio bound}\label{sec:adjacency_structure}

In this section, we consider a restriction of SDP \eqref{eq:SDP_max}, the optimal solution of which will allow us to prove Theorems \ref{thm:energy_n_minus_ratio} and \ref{thm:energy_regular_scaled_complement}.

Consider the following restriction of SDP \eqref{eq:SDP_max}.

    \begin{equation}\label{eq:SDP_adjacency}
        \begin{aligned}
        \text{maximize}  \ \ &\langle A,X \rangle \\
        \text{subject to}\ \ &X  \succcurlyeq 0, \\
        &I - X \succcurlyeq 0,\\
        & X = aA + b\overline{A} + cI, \\
        & a, b \in \mathbb{R}. 
        \end{aligned}
    \end{equation}
    Note that since $A$ has $0$ diagonal we can shift the eigenvalues of $X$ without changing the value of the product $\langle A,X \rangle$, by adding a multiple of the identity matrix $I$ to $X$. Hence, we can ignore $c$, as this will not affect the gap between the largest and the smallest eigenvalue. So in order to provide a feasible solution for SDP \eqref{eq:SDP_adjacency}, it suffices to find values of $a$ and $b$ such that the eigenvalues of $aA + b\overline{A}$ lie in an interval of size $1$.

\begin{lemma}\label{lem:energy_bound_adjacency}
    Let $G \not \cong \overline{K_n}$ be a graph on $n$ vertices and $m$ edges with adjacency eigenvalues $\lambda_1 \geq \dots \geq \lambda_n$, then
    \begin{equation*}
        \frac{1}{2}\mathcal{E}(G) \geq \frac{2m}{\lambda_1 - \lambda_n}.
    \end{equation*}
\end{lemma}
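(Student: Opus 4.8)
The plan is to exhibit a feasible solution for the restricted SDP \eqref{eq:SDP_adjacency} of the simplest possible form, namely $X = aA + cI$ with $b = 0$, and to choose the scalars so that the spectrum of $X$ lands in an interval of length $1$. Concretely, the eigenvalues of $aA$ are $a\lambda_1, \dots, a\lambda_n$ (taking $a > 0$, which is harmless since $G$ is not edgeless so $\lambda_1 > 0$), and these span an interval of length $a(\lambda_1 - \lambda_n)$. Setting $a = 1/(\lambda_1 - \lambda_n)$ makes this length exactly $1$, and then choosing $c = -\lambda_n/(\lambda_1 - \lambda_n)$ shifts the spectrum to lie in $[0,1]$, so that $0 \preccurlyeq X \preccurlyeq I$ as required.

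With this $X$, I would compute the objective value directly: $\langle A, X \rangle = a \langle A, A \rangle + c \langle A, I \rangle = a \cdot 2m + c \cdot 0 = 2m/(\lambda_1 - \lambda_n)$, using $\langle A, A \rangle = \sum_i \lambda_i^2 = \operatorname{tr}(A^2) = 2m$ and that $A$ has zero diagonal. By Theorem \ref{thm:SDP_max} (or equivalently the remark following its proof), any feasible $X$ gives $\tfrac12 \mathcal{E}(G) = \sum_{\lambda_i > 0} \lambda_i \geq \langle A, X \rangle$, which yields exactly the claimed inequality $\tfrac12 \mathcal{E}(G) \geq 2m/(\lambda_1 - \lambda_n)$.

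The only subtlety — and the closest thing to an obstacle — is a degenerate case check: I need $\lambda_1 > \lambda_n$ for the scaling to make sense, i.e.\ $G$ must have at least one nonzero eigenvalue gap. This fails precisely when $A$ has a single eigenvalue, which (since $\operatorname{tr}(A) = 0$) forces $A = 0$, i.e.\ $G \cong \overline{K_n}$; the hypothesis $G \not\cong \overline{K_n}$ rules this out. I would also note that since $G \not\cong \overline{K_n}$ we have $m \geq 1$ and $\lambda_1 > 0 > \lambda_n$ (the latter because $\operatorname{tr}(A) = 0$ with $A \neq 0$), so $a > 0$ and $c \in (0,1)$, confirming the eigenvalues of $X = aA + cI$ genuinely lie in $[0,1]$. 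Everything else is a one-line computation, so the proof is short; the main content is simply recognizing that the right feasible point is a scaled-and-shifted copy of $A$ itself.
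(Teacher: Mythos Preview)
Your proof is correct and follows essentially the same approach as the paper's own proof: both take $X = aA + cI$ with $a = 1/(\lambda_1-\lambda_n)$, observe that the eigenvalues of $aA$ then lie in an interval of length $1$ so a suitable shift $c$ makes $X$ feasible, and compute $\langle A,X\rangle = 2m/(\lambda_1-\lambda_n)$. You are slightly more explicit than the paper in writing down $c = -\lambda_n/(\lambda_1-\lambda_n)$ and in spelling out why $G\not\cong\overline{K_n}$ is needed, but the argument is the same.
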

\begin{proof}
    Let $a = \frac{1}{\lambda_1 - \lambda_n}$. Note that the eigenvalues of the matrix $aA$ lie in an interval of size $1$ and, therefore, there exists $c$ such that $X = aA + cI$ is feasible with objective value $\langle A, X \rangle = a\langle A, A \rangle = \frac{2m}{\lambda_1 - \lambda_n}$.
\end{proof}

The choice of $a$ in Lemma \ref{lem:energy_bound_adjacency} is in fact optimal when assuming $b=0$, as $a = \frac{1}{\lambda_1 - \lambda_n}$ is the largest value such that the eigenvalues of $aA$ lie in an interval of size $1$.

Let $G \not \cong \overline{K_n}$ be a $k$-regular graph on $n$ vertices and $m$ edges with independence number $\alpha(G)$ and adjacency eigenvalues $k = \lambda_1 \geq \dots \geq \lambda_n$. Define the quantity $H(G) := n \frac{-\lambda_n}{\lambda_1 - \lambda_n}$, also known as Hoffman's ratio bound for the independence number:
\begin{equation*}
    \alpha(G) \leq H(G) = n \frac{-\lambda_n}{\lambda_1 - \lambda_n}.
\end{equation*}

Using Lemma \ref{lem:energy_bound_adjacency}, we can prove Theorem \ref{thm:energy_n_minus_ratio}.

\begin{proof}[Proof of Theorem \ref{thm:energy_n_minus_ratio}]
    It follows by a direct application of Lemma \ref{lem:energy_bound_adjacency}:
    \begin{equation*}
        \frac{1}{2} \mathcal{E}(G) \geq \frac{2m}{\lambda_1 - \lambda_n} = \frac{nk}{\lambda_1 - \lambda_n} = \frac{n(\lambda_1-\lambda_n) + n\lambda_n}{\lambda_1 - \lambda_n} = n - H(G). \qedhere
    \end{equation*}
\end{proof}

We remark that Hoffman's ratio bound $H(G)$ is incomparable with $\chi_f(\overline{G})$. Moreover, it is known that there are exponentially many graphs which meet the ratio bound with equality, see e.g. \cite{abiad2025hoffman,abiad2025hoffmanv2}.

We now consider the case where $b$ might be non-zero. 

\begin{proof}[Proof of Theorem \ref{thm:energy_regular_scaled_complement}]
    Let $\mathbf{v_1}, \hdots, \mathbf{v_n}$ be an orthonormal basis of eigenvectors of $A$ with eigenvector $\mathbf{v_i}$ associated with eigenvalue $\lambda_i$. In particular, since $G$ is regular, $\mathbf{v_1} = \frac{1}{\sqrt{n}} \mathbf{1}$. Hence, $\mathbf{v_i}$ are also eigenvectors of $aA + b\overline{A}$ corresponding to the following $n$ eigenvalues:
    \begin{equation*}
        (aA + b\overline{A})\mathbf{v_i}
        = \begin{cases}
            \left((a-b)\lambda_i + (n-1)b\right)\mathbf{v_i} &\text{ if } i=1, \\
            \left((a-b)\lambda_i - b\right)\mathbf{v_i} &\text{ else.}
        \end{cases}      
    \end{equation*}
    Denote these eigenvalues by $\mu'_1 , \hdots, \mu'_n$, with $\mu'_i$ corresponding to $\mathbf{v_i}$. Note that they are not necessarily ordered by size, unlike the $\lambda_i$. Denote by $\mu_1 \geq \cdots \geq \mu_n$ the \emph{ordered} eigenvalues of $aA + b\overline{A}$. Note that
    \begin{align*}
        \mu_1 &= \max\{\mu'_1, \mu'_2, \mu'_n\}, & 
        \mu_n &= \min\{\mu'_1, \mu'_2, \mu'_n\}.
    \end{align*}

    The feasible solutions for SDP \eqref{eq:SDP_adjacency} are exactly the values $a$ and $b$ such that the eigenvalues of $aA + b\overline{A}$ lie in an interval of size at most one. By choosing
    \begin{align*}
        a &= \frac{n + \lambda_2 - \lambda_1}{n(\lambda_2 - \lambda_n)}, & 
        b &= \frac{\lambda_2 - \lambda_1}{n(\lambda_2 - \lambda_n)},
    \end{align*}

    we have $\mu_1 = \mu_1'= \mu_2'$, and subsequently $\mu_n = \mu'_n$. To show that this solution is feasible, we only need to bound $\mu_1-\mu_n$:
    \begin{equation*}
        \mu_1-\mu_n = \mu_2'-\mu_n' =  (a-b)(\lambda_2-\lambda_n) = \frac{n(\lambda_2 - \lambda_n)}{n(\lambda_2 - \lambda_n)} \leq 1.
    \end{equation*}
    And finally, we compute the objective value:
    \begin{align*}
        \langle A, aA + b\overline{A} + cI\rangle &= a\langle A, A\rangle \\
        &= \frac{2m(n + \lambda_2 - \lambda_1)}{n(\lambda_2 - \lambda_n)} \\
        &= \frac{2m - \lambda_1(\lambda_1 - \lambda_2)}{\lambda_2 - \lambda_n}. \qedhere
    \end{align*}
\end{proof}

The choice of $a$ and $b$ used in the above proof of Theorem \ref{thm:energy_regular_scaled_complement} might seem arbitrary, but they are in fact optimal, as we demonstrate in the following lemma.

\begin{lemma}\label{lem:compl_optimality}
    Let $G \not \cong K_n, \overline{K_n}$ be a regular graph on $n$ vertices with adjacency eigenvalues $\lambda_1 \geq \cdots \geq \lambda_n$.
    Then, the optimal solution of SDP \eqref{eq:SDP_adjacency} has objective value 
    \begin{equation*}
     \frac{2m - \lambda_1(\lambda_1 - \lambda_2)}{\lambda_2 - \lambda_n}. 
    \end{equation*}
\end{lemma}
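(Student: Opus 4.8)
The plan is to show that the specific feasible solution exhibited in the proof of Theorem~\ref{thm:energy_regular_scaled_complement} is in fact optimal, i.e.\ that no feasible pair $(a,b)$ for SDP~\eqref{eq:SDP_adjacency} can give a larger value of $a\langle A,A\rangle = 2ma$. Since $m>0$ (because $G\not\cong\overline{K_n}$), maximizing the objective is the same as maximizing $a$ subject to the constraint that the eigenvalues $\mu'_1 = (a-b)\lambda_1 + (n-1)b$ and $\mu'_i = (a-b)\lambda_i - b$ (for $i\ge 2$) of $aA+b\overline{A}$ lie in an interval of length at most~$1$. Writing $s = a-b$ and keeping $b$ as the second variable, the eigenvalues are $s\lambda_i - b$ for $i\ge 2$ together with $s\lambda_1 + (n-1)b = sk+(n-1)b$. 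First I would observe that we may assume $s\ge 0$: if $s<0$ one can replace $(a,b)$ by $(-a,-b)$, which negates all $\mu'_i$, hence preserves feasibility, and replaces $a$ by $-a$, so the optimal $a$ is nonnegative and WLOG $s \geq 0$ as well (the case $s=0$ forces all off-diagonal-type eigenvalues equal and gives objective $0$, which is not optimal since $G\neq \overline{K_n}$ guarantees a strictly positive value is attainable).

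Next, for fixed $s>0$ the spread of $\{s\lambda_2-b,\dots,s\lambda_n-b\}$ is $s(\lambda_2-\lambda_n)$, independent of $b$, so feasibility already requires $s(\lambda_2-\lambda_n)\le 1$, i.e.\ $s\le \frac{1}{\lambda_2-\lambda_n}$. The remaining freedom in $b$ is used only to place the extra eigenvalue $sk+(n-1)b$ inside the window $[\,s\lambda_n-b,\ s\lambda_2-b\,]$ (after also accounting for the width-$1$ constraint on the whole set). I would then analyze the two ways the maximum and minimum of all $n$ eigenvalues can be realized. The key point is that $a = s+b$, so to maximize $a$ we want $b$ as large as possible for the largest feasible $s$. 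Pushing $b$ up moves the "$i\ge 2$" eigenvalues down and the special eigenvalue $sk+(n-1)b$ up; the binding constraint becomes $\mu_1 = sk+(n-1)b$ matched against $\mu_n = s\lambda_n - b$, giving $\mu_1-\mu_n = s(k-\lambda_n) + nb \le 1$. Combined with $s(\lambda_2-\lambda_n)\le 1$, a short case analysis (whether the width constraint is tight via $\mu'_2-\mu'_n$ or via $\mu'_1-\mu'_n$) shows the optimum occurs exactly when both $\mu'_1=\mu'_2$ (so the special eigenvalue sits at the top of the window) and $\mu'_2-\mu'_n=1$, which is precisely the system solved by the stated $a,b$; here one uses $\lambda_1=k>\lambda_2$, valid because $G\neq K_n$ and $G$ has at least one edge, ensuring the algebra is non-degenerate and $a>0$.

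Concretely, I would set up the linear program in $(s,b)$: maximize $s+b$ subject to (i) $s(\lambda_2-\lambda_n)\le 1$, (ii) $sk+(n-1)b - (s\lambda_n-b) \le 1$, i.e.\ $s(k-\lambda_n)+nb\le 1$, and (iii) the constraint that $sk+(n-1)b$ does not fall below $s\lambda_n-b$ nor force a width $>1$ in any other configuration — one checks the optimum of this LP lies at the vertex where (i) and (ii) are both tight, solve the resulting $2\times 2$ system for $s$ and $b$, recover $a=s+b = \frac{n+\lambda_2-\lambda_1}{n(\lambda_2-\lambda_n)}$, and conclude the objective equals $2ma = \frac{2m(n+\lambda_2-\lambda_1)}{n(\lambda_2-\lambda_n)} = \frac{2m-\lambda_1(\lambda_1-\lambda_2)}{\lambda_2-\lambda_n}$ using $2m=nk=n\lambda_1$. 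The main obstacle I anticipate is the bookkeeping of which triple among $\{\mu'_1,\mu'_2,\mu'_n\}$ achieves $\mu_1$ and $\mu_n$ across the range of sign patterns of $a-b$ and $b$: one has to rule out that a different vertex of the feasible polygon (for instance one where the width constraint is governed by $\mu'_1-\mu'_n$ rather than $\mu'_2-\mu'_n$, or where $b<0$) yields a larger $a$. This is a finite but slightly tedious case check; everything else is the routine $2\times 2$ linear algebra already carried out in the proof of Theorem~\ref{thm:energy_regular_scaled_complement}.
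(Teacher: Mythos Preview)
Your approach --- reduce to the two-variable linear program in $(a,b)$ and argue that the maximum of $a$ is attained at the vertex where $\mu_1'=\mu_2'$ and $\mu_2'-\mu_n'=1$ --- is correct in outline and will go through once the case analysis is completed. One caution: your reduction ``WLOG $s\ge 0$'' is not justified by the symmetry $(a,b)\mapsto(-a,-b)$. That involution shows only that the optimal $a$ is nonnegative, not that $a-b\ge 0$ at the optimum; a priori one could have $a>0$ and $b>a$. This simply means the $s<0$ regime must be folded into the ``finite but tedious case check'' you already anticipate, so the argument is still repairable, but the shortcut as stated does not hold.

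The paper takes a different and shorter route that sidesteps the case analysis entirely. It first observes (as you do) that feasibility is equivalent to the six linear inequalities $|\mu_i'-\mu_j'|\le 1$ for $(i,j)\in\{(1,2),(1,n),(2,n)\}$, valid for either sign of $s$. Rather than enumerating vertices of this polygon, it writes down the \emph{dual} linear program and exhibits an explicit dual-feasible vector with objective value $\tfrac{2m-\lambda_1(\lambda_1-\lambda_2)}{\lambda_2-\lambda_n}$. Weak LP duality then bounds the primal from above in one stroke; combined with the primal feasible point already constructed in the proof of Theorem~\ref{thm:energy_regular_scaled_complement}, this pins down the optimum. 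Your direct vertex-enumeration argument is more elementary and self-contained, but the duality argument is cleaner and avoids all sign and ordering bookkeeping.
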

\begin{proof}
    Let $\mu_i'$ and $\mu_i$ be defined as in the proof of Theorem \ref{thm:energy_regular_scaled_complement}, and recall that the feasible solutions for SDP \eqref{eq:SDP_adjacency} are exactly the values $a$ and $b$ such that $\mu_1 - \mu_n \leq 1$, we write this restriction as the Linear Program (LP)
     \eqref{eq:LP_scaled_complement}, that has the same optimal solution as SDP \eqref{eq:SDP_adjacency}.

    Let $c_{ij}$ and $d_{ij}$ be such that $\mu_i' - \mu_j' = ac_{ij} + bd_{ij}$. That is, $c_{ij} = \lambda_i-\lambda_j$, $d_{ij} = \lambda_j - \lambda_i$ for $i \neq 1$ and $d_{1j} = n-\lambda_1+\lambda_j$. Note that $d_{ij} = -c_{ij}$ for $i \neq 1$ and that $c_{1j}+d_{1j} = n$.
   \begin{equation}\label{eq:LP_scaled_complement}
        \begin{aligned}
        \text{maximize}  \ \ &2ma \\
        \text{subject to}\ \
        &|ac_{12} + bd_{12}|\leq 1,\\
        &|ac_{1n} + bd_{1n}|\leq 1,\\
        &|ac_{2n} + bd_{2n}|\leq 1,\\
        & a, b \in \mathbb{R}.
        \end{aligned}
    \end{equation}
     
    Consider the dual of LP \eqref{eq:LP_scaled_complement}:
    \begin{equation}\label{eq:LP_dual_scaled_complement}
        \begin{aligned}
        \text{minimize}  \ \ & x_1 + x_2 + x_3 + x_4 + x_5 + x_6 \\
        \text{subject to}\ \
        &(x_1-x_2)c_{12} + (x_3-x_4)c_{1n} + (x_5-x_6)c_{2n} = 2m,\\
        &(x_1-x_2)d_{12} + (x_3-x_4)d_{1n} + (x_5-x_6)d_{2n} = 0,\\
        & x_i \geq 0.
        \end{aligned}
    \end{equation}
    LP \eqref{eq:LP_dual_scaled_complement} has feasible solution 
    \begin{equation*}
        x_i = \begin{cases}
            \frac{2m}{n} & \text{if } i = 3,\\
            \frac{2m}{n}\frac{d_{1n}}{c_{2n}} = \frac{-x_3d_{1n}}{d_{2n}} & \text{if } i = 5,\\
            0 & \text{else.}
        \end{cases}
    \end{equation*}

    With objective value 
    \begin{equation*}
        x_3 + x_5 = \frac{2m}{n} \cdot \frac{c_{2n} + d_{1n}}{c_{2n}} = \frac{2m - \lambda_1(\lambda_1 - \lambda_2)}{\lambda_2 - \lambda_n}.
    \end{equation*}

    Finally, the lemma holds by weak duality of linear programs.
\end{proof}

The power of Theorem \ref{thm:energy_regular_scaled_complement} comes from only needing limited knowledge of the spectrum of a graph ($\lambda_1,\lambda_2$ and $\lambda_n$), whereas using the energy explicitly requires much more information on the spectrum. To show how this can be useful for tackling Conjecture \ref{con:energy_lb}, consider the following result, where we show the conjecture holds for Johnson graphs.

\begin{corollary}\label{cor:Johnson}
    Conjecture \ref{con:energy_lb} holds for Johnson graphs $J(r, k)$. 
\end{corollary}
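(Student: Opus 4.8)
The plan is to apply Theorem \ref{thm:energy_regular_scaled_complement} to $G = J(r,k)$ and verify that the resulting lower bound $\frac{2m - \lambda_1(\lambda_1 - \lambda_2)}{\lambda_2 - \lambda_n}$ is at least $n - \alpha(J(r,k))$. First I would recall the standard spectral data of the Johnson scheme: $J(r,k)$ has $n = \binom{r}{k}$ vertices, it is $k(r-k)$-regular, and its eigenvalues are $\lambda_j = (k-j)(r-k-j) - j$ for $j = 0, 1, \dots, k$ (assuming $k \le r/2$), with $\lambda_1 = \lambda_{(j=0)} = k(r-k)$, and the second-largest distinct eigenvalue is $\lambda_2 = (k-1)(r-k-1) - 1$ (the $j=1$ eigenvalue), while the least eigenvalue is $\lambda_n = \lambda_{(j=k)} = -k$. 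From $2m = n\lambda_1 = \binom{r}{k}k(r-k)$ and $\lambda_1 - \lambda_2 = k(r-k) - (k-1)(r-k-1) + 1 = r$, one gets $\lambda_1(\lambda_1-\lambda_2) = k(r-k)r$, and $\lambda_2 - \lambda_n = (k-1)(r-k-1) - 1 + k$.

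Next I would simplify: using the Erdős–Ko–Rado theorem, $\alpha(J(r,k)) = \binom{r-1}{k-1}$ for $r \ge 2k$, so $n - \alpha = \binom{r}{k} - \binom{r-1}{k-1} = \binom{r-1}{k}$. The goal then becomes the inequality
\begin{equation*}
\frac{k(r-k)\left(\binom{r}{k} - r\right)}{(k-1)(r-k-1) + k - 1} \;\ge\; \binom{r-1}{k},
\end{equation*}
after factoring the denominator as $\lambda_2 - \lambda_n = (k-1)(r-k)$ (one checks $(k-1)(r-k-1) - 1 + k = (k-1)(r-k)$). So the target reduces to showing
\begin{equation*}
\frac{k(r-k)}{(k-1)(r-k)}\left(\binom{r}{k} - r\right) = \frac{k}{k-1}\left(\binom{r}{k} - r\right) \;\ge\; \binom{r-1}{k} = \frac{r-k}{r}\binom{r}{k}.
\end{equation*}
Since $\frac{k}{k-1} \ge 1 \ge \frac{r-k}{r}$, it suffices to check $\binom{r}{k} - r \ge \frac{r-k}{r}\binom{r}{k}$, i.e. $\frac{k}{r}\binom{r}{k} \ge r$, i.e. $\binom{r-1}{k-1} \ge r$, which holds for all $r \ge 2k \ge 4$ except small base cases that can be checked by hand (the degenerate cases $k=1$, where $J(r,1) = K_r$ and equality holds trivially, and $2k > r$, handled by the complementation isomorphism $J(r,k) \cong J(r, r-k)$, or directly since energy and the conjecture are symmetric, should be dispatched separately).

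The main obstacle I anticipate is bookkeeping rather than conceptual: getting the eigenvalue formulas and their ordering exactly right (in particular confirming that $\lambda_2$ as the "second largest" in the ordered sense really is the $j=1$ eigenvalue and not some other $j$, which requires $(k-j)(r-k-j)-j$ to be decreasing in $j$ over the relevant range — true for $r \ge 2k$), and handling the boundary cases $k = 1$ and $r = 2k$ (where $\lambda_n$ or the multiplicity structure degenerates) cleanly. Once the spectral data is pinned down, the inequality collapses to the elementary bound $\binom{r-1}{k-1} \ge r$, so I would present the eigenvalue computation carefully and then let the arithmetic finish it.
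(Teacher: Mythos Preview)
Your approach is the same as the paper's---apply Theorem~\ref{thm:energy_regular_scaled_complement} with the explicit Johnson-scheme eigenvalues $\lambda_1=k(r-k)$, $\lambda_2=k(r-k)-r$, $\lambda_n=-k$ and simplify the bound to $\frac{k(n-r)}{k-1}$---and your spectral bookkeeping is correct. But the final reduction has a genuine gap. The sufficient condition you land on, $\binom{r-1}{k-1}\ge r$, fails for \emph{every} $r$ when $k=2$ (since $\binom{r-1}{1}=r-1<r$), so the triangular graphs $J(r,2)$ are not covered; this is an infinite family, not ``small base cases that can be checked by hand.'' The loss happens at the step ``since $\frac{k}{k-1}\ge 1$, it suffices to check\ldots'': for $k=2$ that factor is $2$, and discarding it is fatal. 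You would need either to treat $k=2$ separately (the full inequality $\frac{k}{k-1}\bigl(\binom{r}{k}-r\bigr)\ge \binom{r-1}{k}$ does hold for $k=2$, $r\ge 4$) or to avoid the lossy step.

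The paper's endgame is cleaner and avoids this entirely: it does not compute $\alpha(J(r,k))$ or invoke Erd\H{o}s--Ko--Rado at all. Instead it shows directly that $\frac{k(n-r)}{k-1}\ge n$, a stronger conclusion than $\ge n-\alpha$, which rearranges to $\binom{r}{k}\ge rk$ and holds for all $r\ge 5$, $k\ge 2$ (with $k=1$ and the remaining $r<5$ cases dispatched separately). This handles $k=2$ uniformly.
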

\begin{proof}
     The Johnson graph $J(r,k)$ has adjacency eigenvalues $(k-j)(r-k-j)-j$ for $j = 0,\hdots,\min\{k,r-k\}$. Since $J(r, k) \cong J(r, r-k)$, we assume $k \leq r-k$. So, $\lambda_1 = k(r-k),\lambda_2 = k(r-k)-r, \lambda_{n} = -k$ and $J(r,k)$ has $\binom{r}{k}$ vertices. 
     
     If $r \geq 5$ and $k \geq 2$, then
     \begin{equation*}
         \frac{2m- \lambda_1(\lambda_1 + \lambda_2)}{\lambda_2 - \lambda_n} = \frac{k(n-r)}{k-1} \geq n \iff \binom{r}{k} \geq rk.
     \end{equation*}
     Where the last inequality holds if $r \geq 5$ and $k \geq 2$. We can then use Theorem \ref{thm:energy_regular_scaled_complement} to conclude $\frac{1}{2}\mathcal{E}(J(r, k)) \geq n$.
     
     If $k = 1$, then $J(r, k)$ is a complete graph, which satisfies Conjecture \ref{con:energy_lb}. The remaining cases for $r < 5$ can be checked computationally.
\end{proof}

We note that Johnson graphs are not perfect in general, and the ratio bound is not always tight. Hence, neither Theorem \ref{thm:energy_n_minus_chi_f} nor Lemma \ref{lem:energy_bound_adjacency} can be used to prove Conjecture \ref{con:energy_lb} for Johnson graphs.

Moreover, Theorem \ref{thm:energy_regular_scaled_complement} can be seen as a lower bound for the graph energy that is interesting by itself, as it refines the following result due to Nikiforov \cite{nikiforov2007energy}. 

Let $M$ be a $m\times n$ matrix with complex entries, let $M^*$ be the Hermitian adjoint of $M$ and let $\|M\|_2^2 = \text{tr}(MM^*)$. The singular values $\sigma_1(M) \geq \dots \geq \sigma_m(M)$ of a matrix $M$ are the square roots of the eigenvalues of $MM^*$. The energy of $M$ is defined as $\mathcal{E}(M) = \sigma_1(M) + \dots + \sigma_m(M)$.
\begin{proposition}[\cite{nikiforov2007energy}, Eq.~(4)]\label{prop:nikiforov_lb}
    Let $M$ be a nonconstant matrix, then 
\begin{equation*}
    \mathcal{E}(M) \geq \sigma_1(M) + \frac{\|M\|_2^2 - \sigma_1^2(M)}{\sigma_2(M)}.
\end{equation*}
\end{proposition}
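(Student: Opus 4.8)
The plan is to deduce the proposition from an elementary inequality between the singular values of $M$. Write $\sigma_1 \ge \sigma_2 \ge \dots \ge \sigma_m$ for the singular values, so that $\mathcal{E}(M) = \sum_{i=1}^m \sigma_i$ by definition, and recall that the eigenvalues of $MM^*$ are exactly $\sigma_1^2,\dots,\sigma_m^2$, whence $\|M\|_2^2 = \operatorname{tr}(MM^*) = \sum_{i=1}^m \sigma_i^2$, and in particular $\|M\|_2^2 - \sigma_1^2 = \sum_{i=2}^m \sigma_i^2$. Cancelling the common summand $\sigma_1$, the claimed bound is therefore equivalent to
\begin{equation*}
\sum_{i=2}^m \sigma_i \ge \frac{1}{\sigma_2}\sum_{i=2}^m \sigma_i^2 .
\end{equation*}

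To prove this, I would use only the ordering of the singular values: for every $i \ge 2$ we have $\sigma_i \le \sigma_2$, hence $\sigma_i^2 = \sigma_i\cdot\sigma_i \le \sigma_2\cdot\sigma_i$. Summing these inequalities over $i = 2,\dots,m$ gives $\sum_{i\ge 2}\sigma_i^2 \le \sigma_2\sum_{i\ge 2}\sigma_i$, and dividing by $\sigma_2$ yields precisely the displayed inequality; reversing the reduction gives $\mathcal{E}(M) \ge \sigma_1 + (\|M\|_2^2 - \sigma_1^2)/\sigma_2$. Equality holds exactly when $\sigma_i \in \{0,\sigma_2\}$ for all $i \ge 2$, i.e.\ when $\sigma_2 = \sigma_3 = \dots = \sigma_r$ and $\sigma_{r+1} = \dots = \sigma_m = 0$ for some $r$.

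The one point requiring attention is the degenerate case $\sigma_2 = 0$, where the right-hand side is not even defined; this is exactly where the hypothesis that $M$ is nonconstant must be used. In the situation relevant to this paper, $M = A$ is the symmetric, zero-trace adjacency matrix of a graph with at least one edge; then $A$ has both a positive and a negative eigenvalue, so $\operatorname{rank}(A) \ge 2$ and $\sigma_2 > 0$, and the argument above applies verbatim. Apart from this bookkeeping I do not expect any real obstacle: the entire content of the proposition is the observation that the estimate to feed in is the trivial bound $\sigma_i \le \sigma_2$, which (unlike, say, a Cauchy--Schwarz step) is tight precisely in the cases of equality.
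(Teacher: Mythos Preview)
The paper does not supply its own proof of this proposition: it is quoted verbatim as Nikiforov's inequality \cite[Eq.~(4)]{nikiforov2007energy} and used only as a point of comparison for Theorem~\ref{thm:energy_regular_scaled_complement}. Your argument is correct and is precisely the standard one-line derivation: from $\sigma_i \le \sigma_2$ for all $i \ge 2$ one gets $\sum_{i\ge 2}\sigma_i^2 \le \sigma_2\sum_{i\ge 2}\sigma_i$, which after dividing by $\sigma_2$ and adding $\sigma_1$ to both sides is exactly the claim.

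The only soft spot you already flagged yourself: the hypothesis ``nonconstant'' does not literally force $\sigma_2 > 0$ (a rank-one matrix such as $uv^*$ can have nonconstant entries), so as stated the right-hand side may be undefined. This is an imprecision inherited from the way the result is quoted, not a flaw in your reasoning; your observation that in the paper's intended application $M=A$ has both a positive and a negative eigenvalue, hence $\sigma_2>0$, is the right way to dispose of it.
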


Nikiforov applies Proposition \ref{prop:nikiforov_lb} in the case $M$ is the adjacency matrix of a random graph to give a lower bound to the asymptotic behavior of the energy of almost all graphs. We observe that by restricting $M$ to be the adjacency matrix of a regular graph, Theorem \ref{thm:energy_regular_scaled_complement} is, in fact, a refinement of Proposition \ref{prop:nikiforov_lb}.

\begin{lemma} \label{lem:refinement_nikiforov_lb}
    Let $G \not \cong K_n, \overline{K_n}$ be a regular graph on $n$ vertices and $m$ edges with adjacency matrix $A$ and eigenvalues $\lambda_1 \geq \dots \geq \lambda_n$. Then
    \begin{equation*}
    \frac{2m - \lambda_1(\lambda_1 - \lambda_2)}{\lambda_2 - \lambda_n} \geq 
    \frac{\|A\|_2^2 - \sigma_1(A)(\sigma_1(A) - \sigma_2(A))}{2\sigma_2(A)}.
\end{equation*}
\end{lemma}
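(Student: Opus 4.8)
The plan is to convert the stated inequality into a purely algebraic statement about the adjacency eigenvalues $\lambda_1 \geq \cdots \geq \lambda_n$ of $A$, using two elementary observations about a $k$-regular graph. First, $\|A\|_2^2 = \operatorname{tr}(A^2) = \sum_i \lambda_i^2 = 2m$, so the quantity $\|A\|_2^2$ appearing on the right is the very same $2m$ appearing on the left. Second, the singular values of the symmetric matrix $A$ are the absolute values $|\lambda_i|$ sorted decreasingly; since $G$ is regular, $\lambda_1 = k$ is the spectral radius, so $\sigma_1(A) = \lambda_1$, and removing one copy of $\lambda_1$ leaves $\max(\lambda_2, -\lambda_n)$ as the largest remaining absolute value, so $\sigma_2(A) = \max(\lambda_2, -\lambda_n)$. (Here $-\lambda_n > 0$ because $G$ has an edge, so $\sigma_2(A) > 0$ and the right-hand side is well defined, while $\lambda_2 - \lambda_n > 0$ for regular $G \not\cong K_n$ as already used in Theorem~\ref{thm:energy_regular_scaled_complement}.) After these substitutions the claim reads
\begin{equation*}
\frac{2m - \lambda_1(\lambda_1 - \lambda_2)}{\lambda_2 - \lambda_n} \;\geq\; \frac{2m - \lambda_1\bigl(\lambda_1 - \max(\lambda_2,-\lambda_n)\bigr)}{2\max(\lambda_2,-\lambda_n)},
\end{equation*}
and I would split according to which of $\lambda_2$ and $-\lambda_n$ is larger, writing $T := 2m - \lambda_1^2 = \sum_{i\ge 2}\lambda_i^2 \geq 0$ and $P := 2m - \lambda_1(\lambda_1 - \lambda_2) = T + \lambda_1\lambda_2$ for brevity.

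In the first case, $\lambda_2 \geq -\lambda_n$, we have $\sigma_2(A) = \lambda_2$ and the two numerators coincide, both equal to $P$. One checks $P > 0$: if $P = 0$ then $T = 0$, forcing $\lambda_2 = \cdots = \lambda_n = 0$, and then $\operatorname{tr}(A) = 0$ forces $\lambda_1 = 0$, i.e. $G \cong \overline{K_n}$, which is excluded (in particular $\lambda_2 > 0$ here). Cancelling $P$, the inequality reduces to $1/(\lambda_2 - \lambda_n) \geq 1/(2\lambda_2)$, i.e. $\lambda_2 \geq -\lambda_n$, which is exactly the hypothesis of this case, with the boundary $\lambda_2 = -\lambda_n$ giving equality.

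In the second case, $\lambda_2 < -\lambda_n =: s$, so $\sigma_2(A) = s$ and the right-hand side is $\bigl(2m - \lambda_1(\lambda_1 + \lambda_n)\bigr)/(2s) = (T + \lambda_1 s)/(2s)$. Multiplying through by the positive quantities $\lambda_2 - \lambda_n = \lambda_2 + s$ and $2s$, a short expansion collapses the target inequality to
\begin{equation*}
2sP - (\lambda_2 + s)(T + \lambda_1 s) \;=\; (s - \lambda_2)\bigl(T + \lambda_1\lambda_n\bigr) \;\geq\; 0.
\end{equation*}
The factor $s - \lambda_2$ is positive by the case hypothesis, so it remains to prove $T + \lambda_1\lambda_n \geq 0$, i.e. $2m \geq \lambda_1(\lambda_1 - \lambda_n)$, i.e. (using $\lambda_1 = k$ and $2m = nk$) $\lambda_n \geq k - n$. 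This last fact is where regularity genuinely enters: the bottom eigenvector $v_n$ of $A$ is orthogonal to $\mathbf 1$ (as $\lambda_n < k = \lambda_1$ for $G \not\cong \overline{K_n}$), so $A(\overline{G})v_n = (J - I - A)v_n = (-1 - \lambda_n)v_n$, whence $-1-\lambda_n$ is an eigenvalue of the $(n-1-k)$-regular graph $\overline{G}$ and therefore $-1-\lambda_n \leq n-1-k$, giving $\lambda_n \geq k - n$. This closes the second case.

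I expect the principal difficulty to be not a single deep step but the bookkeeping: correctly expressing $\sigma_1(A)$ and $\sigma_2(A)$ through the $\lambda_i$, arranging the case distinction so that the boundary case $\lambda_2 = -\lambda_n$ is subsumed consistently, and carrying out the cross-multiplication in the second case without sign slips so that it telescopes to the clean product $(s - \lambda_2)(T + \lambda_1\lambda_n)$. Once the algebra is organized this way, the only genuinely graph-theoretic input is the standard bound $\lambda_n(G) \geq k - n$ for $k$-regular graphs, which, as above, drops out immediately from inspecting the complement.
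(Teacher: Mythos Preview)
Your proof is correct and follows essentially the same route as the paper: identify $\|A\|_2^2=2m$, $\sigma_1(A)=\lambda_1$, $\sigma_2(A)=\max(\lambda_2,-\lambda_n)$, split into the two cases, and in the second case factor the cross-multiplied difference as $(s-\lambda_2)(T+\lambda_1\lambda_n)$, which is exactly the paper's numerator $\lambda_1(|\lambda_n|-\lambda_2)(n-\lambda_1-|\lambda_n|)$ rewritten. The only substantive difference is that the paper cites \cite{gregory2001spread} for the regular-graph spread bound $n\geq\lambda_1+|\lambda_n|$, whereas you supply a self-contained one-line proof via the complement; and your handling of Case~1 is slightly more explicit about why $P>0$.
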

\begin{proof}
Note that under our assumption of $G$, $\|A\|_2^2 = 2m = \lambda_1 n$, $\sigma_1(A) = \lambda_1$ and $\sigma_2(A) = \max\{|\lambda_2|, |\lambda_n|\}$. If $\sigma_2(A) = |\lambda_2|$, then the inequality follows easily. Assume that $\sigma_2(A) = |\lambda_n| > |\lambda_2|$, we check that the difference between the two bounds is non-negative.
\begin{align*}
    \frac{2m - \lambda_1(\lambda_1 - \lambda_2)}{\lambda_2 + |\lambda_n|} -  \frac{2m - \lambda_1(\lambda_1 - |\lambda_n|)}{2|\lambda_n|}
    = \frac{\lambda_1 (|\lambda_n|-\lambda_2)(n-\lambda_1-|\lambda_n|)}{2|\lambda_n|(\lambda_2+|\lambda_n|)} \geq 0.
\end{align*}
Where the inequality follows since $\lambda_1 \geq 0$, we are assuming $|\lambda_n| > |\lambda_2|$ and for regular graphs $n \geq \lambda_1 + |\lambda_n|$ (see e.g.~\cite{gregory2001spread}).
\end{proof}

We finally note that while the main Theorems proven in this section (Theorems \ref{thm:energy_n_minus_ratio} and \ref{thm:energy_regular_scaled_complement}) only apply to regular graphs, SDP \eqref{eq:SDP_adjacency} can be optimized computationally for non-regular graphs. We used cvxpy in SageMath to computationally solve the SDPs and compare the performance of our various approaches for several SageMath named graphs, and all small graphs on $n$ vertices. The results of these experiments can be found in the Appendix in Tables \ref{tab:named_comp} and \ref{tab:small_comp}.

\section{Highly regular graphs and Schrijver's theta number} \label{sec:high_reg}

In this section, we provide a proof of Theorem \ref{thm:highly_regular}, which gives a stronger bound in support of Conjecture \ref{con:energy_lb} for the so-called highly regular graphs.

\begin{definition}
    Let $G$ be a graph with adjacency matrix $A$. We say that $G$ is \emph{highly regular} if there exists a matrix algebra $\mathcal{A}$ satisfying the properties:
    \begin{enumerate} \label{prop:algebra}
        \item $A, I, J \in \mathcal{A}$;
        \item The elements of $\mathcal{A}$ have constant diagonal;
        \item There exists a transpose-closed 01-basis $\{B_1, \dots, B_k\}$ for $\mathcal{A}$ that splits $A$, that is, $\exists I :A = \sum_{i \in I}B_i$ and $A\circ B_j = 0, \,  \forall j \not \in I$.
    \end{enumerate}
\end{definition}
The class of highly regular graphs encompasses several well-known graph classes, such as distance regular graphs, vertex transitive graphs and 1-walk regular graphs, see \cite{de2019algebras}.

In order to prove Theorem \ref{thm:highly_regular} we first need some preliminary results on highly regular graphs.

\begin{lemma}[\cite{de2019algebras}, Corollary 2.2] \label{lem:algebra_psd}
   Let $\mathcal{A}$ be an algebra satisfying the conditions in Definition \ref{prop:algebra}. Let $M$ be a positive semidefinite matrix, then the projection of $M$ in $\mathcal{A}$ is positive semidefinite.
\end{lemma}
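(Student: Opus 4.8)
I read ``the projection of $M$ in $\mathcal{A}$'' as the orthogonal projection $\pi_{\mathcal{A}}$ onto $\mathcal{A}$ with respect to the Frobenius inner product $\langle X,Y\rangle=\operatorname{tr}(X^{*}Y)$ (when the $B_i$ have pairwise disjoint supports this is just entrywise averaging over each class, but we will not need that). The key point I would stress up front is that $\mathcal{A}$ is not merely a linear subspace but a \emph{unital $*$-algebra}: transpose-closedness of the basis $\{B_1,\dots,B_k\}$ makes $\mathcal{A}=\operatorname{span}\{B_i\}$ closed under transpose, $I\in\mathcal{A}$ by hypothesis, and $\mathcal{A}$ is closed under matrix multiplication; without the algebra structure the statement is false, since orthogonal projection onto a generic subspace of symmetric matrices destroys positive semidefiniteness. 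Since a real symmetric matrix is positive semidefinite iff it is so as a complex Hermitian matrix, and $\pi_{\mathcal{A}\otimes\mathbb{C}}$ agrees with $\pi_{\mathcal{A}}$ on real inputs, I would first complexify and argue for $*$-subalgebras of $\mathrm{Mat}_n(\mathbb{C})$.

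The heart of the argument is to exhibit $\pi_{\mathcal{A}}$ as an average of unitary conjugations. Let $\mathcal{A}'=\{Y:XY=YX\text{ for all }X\in\mathcal{A}\}$ be the commutant, a unital $*$-algebra spanned by its unitaries, with compact unitary group $\mathcal{U}(\mathcal{A}')$; let $dU$ denote normalized Haar measure and put $E(M)=\int_{\mathcal{U}(\mathcal{A}')}UMU^{*}\,dU$. I would then verify $E=\pi_{\mathcal{A}}$ from the defining properties of an orthogonal projection onto $\mathcal{A}$: (i) $E(M)$ commutes with every unitary of $\mathcal{A}'$, hence with all of $\mathcal{A}'$, hence $E(M)\in\mathcal{A}''=\mathcal{A}$ by the double commutant theorem for finite-dimensional $*$-algebras; and (ii) for every $a\in\mathcal{A}$, using invariance of trace and $U^{*}a^{*}U=a^{*}$ (as $a^{*}\in\mathcal{A}$ commutes with $U\in\mathcal{A}'$), one gets $\operatorname{tr}(E(M)a^{*})=\operatorname{tr}(Ma^{*})$, i.e.\ $M-E(M)\perp\mathcal{A}$. (As a sanity check, $E$ fixes $\mathcal{A}$ pointwise since $UaU^{*}=a$ there.)

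Once $\pi_{\mathcal{A}}=E$ is in hand, the conclusion drops out: for $M\succcurlyeq 0$ we have $UMU^{*}\succcurlyeq 0$ for every unitary $U$, and the positive semidefinite cone, being closed and convex, is stable under the Haar average, so $\pi_{\mathcal{A}}(M)=E(M)\succcurlyeq 0$.

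A more hands-on alternative, in the spirit of the SDP symmetry-reduction toolkit, is to block-diagonalize $\mathcal{A}$ via Wedderburn: there is a unitary $U$ with $U^{*}\mathcal{A}U=\bigoplus_{j}\big(\mathrm{Mat}_{d_j}(\mathbb{C})\otimes I_{m_j}\big)$, and in these coordinates $\pi_{\mathcal{A}}$ becomes the compression that zeroes the blocks mixing distinct Wedderburn components (a principal-submatrix operation, hence positive) followed, on each component, by $X\mapsto\tfrac{1}{m_j}(\operatorname{tr}_{m_j}X)\otimes I_{m_j}$, a partial trace composed with tensoring by the identity --- both positive. I expect that the only genuine work in either route is structural bookkeeping: establishing $\mathcal{A}''=\mathcal{A}$ and ``$\mathcal{A}'$ is spanned by unitaries'' for the averaging proof, or tracking the block decomposition and the real/complex/quaternionic subtleties of real $*$-algebras for the Wedderburn proof --- which is why I would present the averaging argument. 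As this is \cite[Corollary~2.2]{de2019algebras}, one could also simply cite it, but I would include the short averaging proof for self-containedness.
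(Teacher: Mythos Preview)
The paper does not prove this lemma at all; it is stated with a bare citation to \cite[Corollary~2.2]{de2019algebras} and used as a black box. Your proposal therefore supplies something the paper deliberately omits rather than offering an alternative to an existing argument.

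That said, the averaging proof you outline is correct. The ingredients you flag are exactly the ones needed: $\mathcal{A}$ is a unital $*$-subalgebra (from $I\in\mathcal{A}$ and transpose-closedness of the basis), so the finite-dimensional double commutant theorem gives $\mathcal{A}''=\mathcal{A}$; the commutant $\mathcal{A}'$ is a unital $*$-algebra and hence spanned by its unitaries; the Haar average $E(M)=\int_{\mathcal{U}(\mathcal{A}')}UMU^{*}\,dU$ then lands in $\mathcal{A}''=\mathcal{A}$, satisfies $\langle E(M)-M,\,a\rangle=0$ for all $a\in\mathcal{A}$, and is manifestly a convex combination of PSD matrices when $M\succcurlyeq 0$. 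This is the standard conditional-expectation-onto-a-$*$-subalgebra argument, and your Wedderburn alternative is equally valid. What your write-up buys over the paper's treatment is self-containedness; what the paper's choice buys is brevity, since the result is already in the literature and is not the paper's contribution.
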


Although the main result of this section is a bound related to Schrijver's theta, we first focus our attention on \emph{Szegedy's theta}, denoted $\vartheta^+(G)$, which is defined as the solution of SDP \eqref{eq:SDP_szegedy}.
\begin{equation}\label{eq:SDP_szegedy}
    \begin{aligned}
        \text{maximize}  \ \ &\langle J,Y \rangle \\
        \text{subject to}\ \ &Y  \succcurlyeq 0, \\
        &\langle I, Y \rangle = 1, \\
        &Y \circ A \leq 0.
    \end{aligned}
\end{equation}

\begin{lemma}[\cite{de2019algebras}, Corollary 10] \label{lem:schrijver_szegedy_n}
   Let $G$ be a $n$-vertex highly regular graph. Then, $$\vartheta^-(G)\vartheta^+(G) = n.$$
\end{lemma}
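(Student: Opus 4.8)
The plan is to prove the identity in two movements: first use the symmetry of highly regular graphs to push the optimal solutions of both programs into the algebra $\mathcal A$, and then read off the two theta numbers in the two canonical bases of $\mathcal A$ so that they appear as a dual pair of linear programs whose optimal values multiply to $n$. The whole argument is modelled on the classical identity $\vartheta(G)\vartheta(\overline G)=n$ for vertex-transitive graphs, with $\mathcal A$ playing the role of the automorphism-invariant subspace and Lemma \ref{lem:algebra_psd} supplying the averaging step.

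First I would show that both SDP \eqref{eq:SDP_szegedy} and the program defining $\vartheta^-(G)$ attain their optima inside $\mathcal A$. Let $\pi$ be the orthogonal projection onto $\mathcal A$; because $\{B_1,\dots,B_k\}$ is an orthogonal $01$-basis, $\pi$ acts by averaging a matrix over the cells $B_i$. I would check that $\pi$ preserves feasibility: it fixes $\langle I,\cdot\rangle$ and $\langle J,\cdot\rangle$ since $I,J\in\mathcal A$; it preserves the Hadamard constraints ($\cdot\circ A=0$, entrywise nonnegativity, and $\cdot\circ A\le 0$) because averaging over the cells $B_i$ that split $A$ leaves the zero/sign pattern on those cells unchanged; and it preserves positive semidefiniteness by Lemma \ref{lem:algebra_psd}. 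Since $\pi$ fixes the objective as well, it sends an optimum to an optimum lying in $\mathcal A$. Property~(2) of Definition \ref{prop:algebra} (constant diagonal) then forces any such solution to have diagonal exactly $\tfrac1n I$.

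Next I would introduce coordinates. The algebra $\mathcal A$ carries two distinguished bases: the Hadamard--idempotent $01$-basis $\{B_i\}$ and the matrix--idempotent basis $\{E_r\}$ of orthogonal projections onto the common eigenspaces, with $E_0=\tfrac1n J$. In the $\{E_r\}$-coordinates positive semidefiniteness becomes nonnegativity of the coefficients and the objective $\langle J,\cdot\rangle$ reduces to (a multiple of) the $E_0$-coefficient, while in the $\{B_i\}$-coordinates the Hadamard constraints become linear; expressing each program in the basis that linearizes its constraints turns both $\vartheta^-(G)$ and $\vartheta^+(G)$ into linear programs over $\mathcal A$, coupled by the eigenmatrix relating the two bases. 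I would then recognize these two LPs as a primal--dual pair under the normalization $\langle I,\cdot\rangle=1$: weak duality yields one of the two inequalities between the product and $n$, while the reverse inequality comes from complementary slackness together with the identity $\langle X,Y\rangle=\tfrac1n$ for the two optimizers, which one expects because the zero/sign patterns of the two programs are complementary off the diagonal, so that the off-diagonal part of their Hadamard product vanishes and only the common diagonal $\tfrac1n I$ survives. Combining the two inequalities pins the product of optimal values to exactly $n$.

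The step I expect to be the main obstacle is the interplay of the two bases, sharpened by the fact that $\mathcal A$ need not be commutative (as for coherent algebras of graphs that are not association schemes). In the commutative case the $E_r$ simultaneously diagonalize everything and the LP duality is transparent; in general one must invoke the Wedderburn decomposition $\mathcal A\cong\bigoplus_j\mathbb C^{d_j\times d_j}$, impose the semidefinite conditions blockwise, and verify that the constant-diagonal and all-ones structure still reduce the relevant data to a single scalar per program. Turning the resulting weak-duality bound into an equality---so that one obtains $\vartheta^-(G)\vartheta^+(G)=n$ and not merely an inequality---is the delicate point, and it is precisely here that the highly regular hypothesis, rather than mere symmetry of the optimizers, is needed.
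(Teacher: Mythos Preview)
The paper does not prove this lemma: it is quoted verbatim from \cite{de2019algebras} and used as a black box in the proof of Theorem~\ref{thm:highly_regular}. So there is no ``paper's own proof'' to compare your sketch against.

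Two remarks are still worth making. First, the identity as printed is a typo: take $G=K_n$, which is highly regular, and compute $\vartheta^-(K_n)=\vartheta^+(K_n)=1$, so the product is $1$, not $n$. What the paper actually \emph{uses} in the last line of the proof of Theorem~\ref{thm:highly_regular} is $\vartheta^-(G)\,\vartheta^+(\overline G)=n$, and that is the correct statement from \cite{de2019algebras}. Your whole outline is written for the statement as printed, and this matters for the crucial step.

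Second, your strategy (project the optimizers into $\mathcal A$ via Lemma~\ref{lem:algebra_psd}, then read off a primal/dual LP pair in the two canonical bases) is exactly the standard route and is essentially what \cite{de2019algebras} does. But the specific mechanism you invoke for equality---``the zero/sign patterns of the two programs are complementary off the diagonal, so the off-diagonal part of their Hadamard product vanishes''---is not correct for $\vartheta^-(G)$ versus $\vartheta^+(G)$: both programs constrain entries on the \emph{edges} of $G$ (one forces them to $0$, the other to be $\le 0$), while non-edge entries of the $\vartheta^+$ optimizer are unconstrained, so $\langle X,Y\rangle$ need not collapse to the diagonal contribution $1/n$. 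The complementarity you want only appears once one of the two programs is taken on $\overline G$; with that correction your sketch becomes the right argument.
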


\begin{lemma}\label{lem:sdp_opt_in_algebra}
    Let $G$ be a highly regular graph with matrix algebra $\mathcal{A}$. There exists a matrix $Y' \in \mathcal{A}$ that is an optimal solution for SDP \eqref{eq:SDP_szegedy}.
\end{lemma}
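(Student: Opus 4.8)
The plan is to take an arbitrary optimal solution $Y^\star$ of SDP \eqref{eq:SDP_szegedy} and replace it by its projection onto the algebra $\mathcal{A}$, then argue that this projection is still feasible and still optimal. Concretely, let $\pi_{\mathcal{A}}$ denote the orthogonal projection (with respect to the trace inner product) onto $\mathcal{A}$, and set $Y' := \pi_{\mathcal{A}}(Y^\star)$. I would verify the three constraints in turn. First, $Y' \succcurlyeq 0$ follows immediately from Lemma \ref{lem:algebra_psd}, since $Y^\star \succcurlyeq 0$ and $Y'$ is its projection into $\mathcal{A}$. Second, $\langle I, Y' \rangle = \langle I, Y^\star \rangle = 1$: this is because $I \in \mathcal{A}$ (property 1 of Definition \ref{prop:algebra}), so projecting onto $\mathcal{A}$ does not change inner products against $I$; equivalently, $\langle I, \pi_{\mathcal{A}}(Y^\star)\rangle = \langle \pi_{\mathcal{A}}(I), Y^\star\rangle = \langle I, Y^\star\rangle$.

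The third constraint, $Y' \circ A \leq 0$, is the step I expect to require the most care, and it is where properties 2 and 3 of the definition of highly regular come in. The idea is that projection onto $\mathcal{A}$ acts "coordinate-wise" with respect to the $01$-basis $\{B_1,\dots,B_k\}$: since this basis is transpose-closed and consists of $01$-matrices with disjoint supports partitioning the all-ones pattern, $\pi_{\mathcal{A}}(Y^\star)$ is obtained by averaging the entries of $Y^\star$ over each "cell" $\operatorname{supp}(B_i)$. (One should check that because the $B_i$ have constant diagonal contributions and the algebra has constant diagonal, the Gram matrix of the $B_i$ under $\langle\cdot,\cdot\rangle$ is diagonal, so $\pi_{\mathcal{A}}(Y^\star) = \sum_i \frac{\langle B_i, Y^\star\rangle}{\langle B_i, B_i\rangle} B_i$.) Now $A = \sum_{i \in I} B_i$, so $Y'\circ A = \sum_{i \in I} \frac{\langle B_i, Y^\star\rangle}{\langle B_i,B_i\rangle} B_i$; on the support of each such $B_i$ (with $i \in I$), the matrix $A$ is identically $1$, so $\langle B_i, Y^\star\rangle = \langle B_i, Y^\star \circ A\rangle \leq 0$ because $Y^\star \circ A \leq 0$ entrywise and $B_i \geq 0$. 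Hence every coefficient appearing in $Y'\circ A$ is $\leq 0$, so $Y'\circ A \leq 0$ entrywise, as needed.

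Finally, optimality: $J = \sum_{i=1}^k B_i \in \mathcal{A}$ (it is the all-ones matrix and $\{B_i\}$ spans the identity pattern; more directly $J \in \mathcal{A}$ by property 1), so $\langle J, Y'\rangle = \langle \pi_{\mathcal{A}}(J), Y^\star\rangle = \langle J, Y^\star\rangle = \vartheta^+(G)$. Therefore $Y'$ is a feasible solution attaining the optimal value, which proves the lemma. The only genuinely delicate point is the entrywise sign argument for $Y'\circ A \leq 0$; everything else is a direct consequence of $\mathcal{A}$ containing $I$ and $J$ together with Lemma \ref{lem:algebra_psd}, so I would present that step in full detail and keep the rest brief.
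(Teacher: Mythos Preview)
Your proposal is correct and follows essentially the same route as the paper: take an optimal solution, project it onto $\mathcal{A}$, use Lemma~\ref{lem:algebra_psd} for positive semidefiniteness, use $I,J\in\mathcal{A}$ for the trace constraint and the objective, and use the $01$-basis splitting $A$ to get $Y'\circ A\le 0$. If anything, you are slightly more careful than the paper in writing out the projection formula with the normalization $\langle B_i,B_i\rangle$ and in explaining why $\langle B_i,Y^\star\rangle\le 0$ for $i\in I$.
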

\begin{proof}
 Let $\mathcal{A}$ be the algebra satisfying the properties from Definition \ref{prop:algebra}. Let $Y$ be an optimal solution for SDP \eqref{eq:SDP_szegedy}; we claim that the projection $Y'$ onto $\mathcal{A}$ is also an optimal solution.

It follows from Lemma \ref{lem:algebra_psd} that $Y' \succcurlyeq 0$. Since $I$ belongs to $\mathcal{A}$ and all matrices of the algebra have constant diagonal, we may assume that $I$ belongs to the $0/1$-basis and thus $\langle Y', I\rangle = \langle Y, I\rangle = 1$. Let $B_1, \dots, B_t$ be $0/1$-matrices on a basis such that $\sum_i B_i = A$, as $Y \circ A \leq 0$, we have that $\langle Y, B_i \rangle \leq 0$ and therefore, $Y' \circ A  = \sum_i \langle Y, B_i \rangle B_i\leq 0$. So $Y'$ is feasible.

Since $J$ belongs to $\mathcal{A}$, we have that $\langle Y', A\rangle = \langle Y, A\rangle$, and we conclude that $Y'$ is an optimal solution for SDP \eqref{eq:SDP_szegedy}.
\end{proof}

Our strategy for proving Theorem \ref{thm:highly_regular} is to construct a feasible solution $X$ to SDP \eqref{eq:SDP_max}. The following key lemma will be used for this purpose.

\begin{lemma}\label{lem:key_lemma_highly_regular}
    Let $G$ be a highly regular graph with matrix algebra $\mathcal{A}$. Let $Y \in \mathcal{A}$ be an optimal solution for SDP \eqref{eq:SDP_szegedy} with largest eigenvalue $\lambda_1(Y)$. Then, $\lambda_1(Y) \leq 1/\vartheta^-(\overline{G})$.
\end{lemma}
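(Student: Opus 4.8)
The plan is to exploit the fact that $Y \in \mathcal{A}$ is optimal for SDP \eqref{eq:SDP_szegedy}, hence $\langle J, Y\rangle = \vartheta^+(G)$, together with the identity $\vartheta^-(G)\vartheta^+(G) = n$ from Lemma \ref{lem:schrijver_szegedy_n} applied to the \emph{complement} $\overline{G}$. Note that $\overline{G}$ is also highly regular: it has adjacency matrix $J - I - A$, which lies in the same algebra $\mathcal{A}$, and one checks the three defining properties carry over (in particular one can split $\overline{A}$ using the basis elements $B_j$ with $j \notin I$ together with, if needed, a refinement accounting for $I$; since $\mathcal{A}$ already contains $J$ and $I$ this is automatic). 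Therefore $\vartheta^-(\overline{G})\vartheta^+(\overline{G}) = n$, so the target inequality $\lambda_1(Y) \leq 1/\vartheta^-(\overline{G})$ is equivalent to $\lambda_1(Y) \leq \vartheta^+(\overline{G})/n$.

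The core of the argument is then to show $\lambda_1(Y) \leq \vartheta^+(\overline{G})/n$. The idea is that $Y$ is a feasible point for Szegedy's SDP on $G$, and I want to turn it into (a scaling of) a feasible point for Szegedy's SDP on $\overline{G}$ in a way that controls $\lambda_1(Y)$. More precisely: $Y \succcurlyeq 0$, $\langle I, Y \rangle = 1$, and $Y \circ A \leq 0$, so in particular $Y$ restricted to the edges of $G$ is nonpositive, which means $Y$ restricted to the non-edges (the edges of $\overline{G}$) carries all the ``positive mass.'' One natural move: let $P$ be the eigenprojection of $Y$ onto its largest eigenvalue (or, since $Y \in \mathcal{A}$, work within the algebra where we can diagonalize simultaneously). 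Because $Y \in \mathcal{A}$ and the all-ones vector is (by constant row sums forced by $I, J \in \mathcal{A}$ plus constant diagonal) an eigenvector, $\lambda_1(Y)$ is attained either by $\mathbf{1}/\sqrt n$ or by a vector orthogonal to it. If it is attained by $\mathbf{1}$, then $\lambda_1(Y) = \langle J, Y\rangle / n = \vartheta^+(G)/n$, and one needs $\vartheta^+(G) \le \vartheta^+(\overline G)$... which is false in general, so this case needs the sandwich $\vartheta^+(G) \le n$ — indeed $\langle J,Y\rangle \le n \langle I, Y \rangle = n$ is not quite it either. Here is the cleaner route: since $Y \circ A \le 0$ and $Y \succcurlyeq 0$ with unit trace, consider $Z := (J\text{-scaled version})$; specifically I expect one shows directly that $\frac{1}{\lambda_1(Y)}$ times the ``positive part'' of $Y$ off the diagonal, suitably normalized, is feasible for SDP \eqref{eq:SDP_szegedy} with $\overline G$ in place of $G$, giving $\vartheta^+(\overline G) \ge (\text{something})/\lambda_1(Y)$.

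Concretely, I would set $Y'' := \frac{1}{\lambda_1(Y)}\bigl( Y - (\text{diagonal adjustment})\bigr)$ or, more likely, use $I - Y/\lambda_1(Y) \succcurlyeq 0$: since $\lambda_1(Y)$ is the largest eigenvalue, $\lambda_1(Y) I - Y \succcurlyeq 0$. Then consider the matrix $W := \lambda_1(Y) I - Y$. It is PSD, it lies in $\mathcal{A}$, its diagonal is constant equal to $\lambda_1(Y) - \tfrac1n$ (using $\langle I,Y\rangle=1$ and constant diagonal of $Y$, so each diagonal entry of $Y$ is $1/n$), and on the \emph{edges of $G$} we have $W \circ A = -Y \circ A \ge 0$. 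That is the wrong sign for $\overline G$; instead on the edges of $\overline G$, i.e. where $\overline A$ is supported, $W$ may be anything. Hmm — so the sign constraint is the delicate part. I therefore expect the actual proof to normalize $W$ to have unit trace and then bound $\langle J, W\rangle$ from below, but it must handle the $Y\circ\overline A$ sign; the likely fix is that we only need $Y \circ \overline A$'s contribution through $\langle J, W\rangle = n\lambda_1(Y) - \langle J, Y\rangle = n\lambda_1(Y) - \vartheta^+(G)$ and compare with an SDP whose feasibility is automatic.

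The main obstacle, as the above groping suggests, is correctly identifying which SDP $W$ (or a normalization of it) is feasible for, and matching signs on the Hadamard/edge constraint — getting a clean relation between $\lambda_1(Y)$, $\vartheta^+(G)$, and $\vartheta^-(\overline G)$ or $\vartheta^+(\overline G)$. I expect the resolution uses Lemma \ref{lem:algebra_psd} (projection onto $\mathcal{A}$ preserves PSD-ness) one more time, plus the fact that $\overline G$ highly regular and Lemma \ref{lem:schrijver_szegedy_n} applied to $\overline G$, to close the loop: produce a feasible solution to Schrijver's (or Szegedy's) SDP for $\overline G$ from $W/\operatorname{tr}(W)$, read off $\vartheta^-(\overline G) \ge 1/(\lambda_1(Y) \cdot (\text{normalizing factor}))$, and simplify using $\operatorname{tr}(W) = n\lambda_1(Y) - 1$ and $\langle J, W \rangle = n\lambda_1(Y) - \vartheta^+(G)$ together with $\vartheta^+(G)\vartheta^-(G) = n$. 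Once the feasibility bookkeeping is done, the final inequality $\lambda_1(Y) \le 1/\vartheta^-(\overline G)$ should drop out by an algebraic rearrangement.
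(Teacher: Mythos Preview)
Your proposal is not a proof; it is an exploration that explicitly runs aground at the key step and never recovers. The concrete gap is the sign constraint. You form $W = \lambda_1(Y)I - Y$, which is PSD with constant diagonal, but to feed it into Szegedy's SDP for $\overline{G}$ you would need $W \circ \overline{A} \le 0$, i.e.\ $Y \circ \overline{A} \ge 0$. Nothing in the hypotheses gives you this: the constraint $Y \circ A \le 0$ controls $Y$ only on the edges of $G$, and on non-edges $Y$ can have either sign. You notice this (``That is the wrong sign for $\overline{G}$''), then hope that ``the feasibility bookkeeping'' will sort itself out; it does not. The fallback of ignoring the sign constraint and just bounding $\langle J, W\rangle$ does not yield a valid SDP bound either, because without feasibility $\langle J, W\rangle$ has no relation to $\vartheta^+(\overline G)$ or $\vartheta^-(\overline G)$.

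The paper's argument is in a different direction entirely: it never passes to $\overline{G}$. Assume for contradiction that $\mu := \lambda_1(Y) > 1/\vartheta^-(\overline{G}) = \vartheta^+(G)/n$. Let $E$ be the spectral projector of $Y$ onto the $\mu$-eigenspace; since $Y \in \mathcal{A}$, $E$ is a polynomial in $Y$, hence $E \in \mathcal{A}$ and has constant diagonal. Set $\overline{E} := nE/\operatorname{tr}E$ and pick $c \ge 0$ large enough that $\overline{E} + cJ \ge 0$ entrywise. Now define
\[
Y' := \frac{1}{c+1}\, Y \circ (\overline{E} + cJ).
\]
This $Y'$ is feasible for the \emph{same} SDP \eqref{eq:SDP_szegedy} for $G$: it is PSD (Schur products and sums of PSD matrices), has trace $1$ (because $\overline{E}$ has constant diagonal equal to $1$), and $Y' \circ A \le 0$ (since $Y \circ A \le 0$ and $\overline{E} + cJ \ge 0$). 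But
\[
\langle J, Y'\rangle = \frac{1}{c+1}\Big(\tfrac{n}{\operatorname{tr}E}\langle Y, E\rangle + c\,\langle J, Y\rangle\Big) = \frac{n}{c+1}\Big(\mu + \tfrac{c}{\vartheta^-(\overline G)}\Big) > \frac{n}{\vartheta^-(\overline G)} = \vartheta^+(G),
\]
contradicting optimality of $Y$. The crucial move you were missing is this Schur-product ``boosting'': multiplying $Y$ entrywise by a PSD, nonnegative matrix with constant diagonal $1$ preserves feasibility for \eqref{eq:SDP_szegedy} while shifting weight toward the top eigenspace.
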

\begin{proof}
    The proof follows by contradiction. Assume that $\lambda_1(Y) = \mu > 1/\vartheta^{-}(\overline{G})$. We show that there exists a matrix $Y'$, feasible for SDP \eqref{eq:SDP_szegedy} with objective value greater than $\vartheta^{+}(G)$.

    Let $E$ be the projector of $Y$ on the $\mu$ eigenspace. Note that since $Y \in \mathcal{A}$, $E$ also belongs to $\mathcal{A}$, as it is a polynomial in $Y$. Therefore, $E$ has constant diagonal. Let $n = |V(G)|$, define $\overline{E} = nE/\text{tr}E$ and let $c \geq 0$ be sufficiently large so that $\overline{E} + cJ \geq 0$.

    Define the matrix $Y' := Y \circ \left(\overline{E} + cJ\right) \frac{1}{c+1}$, we show that $Y'$ is feasible for SDP \eqref{eq:SDP_szegedy}:

    \begin{enumerate}
        \item $Y' \succcurlyeq 0$, because $Y, E, J \succcurlyeq0$ and the cone of PSD matrices is closed under Schur product and addition.
        \item $\langle I, Y'\rangle = \sum_{i} Y_{ii} \left(\overline{E}_{ii} + c\right)\frac{1}{c+1} = \sum_{i} Y_{ii} = 1$, because $\overline{E}$ has constant diagonal equal to $\mathbf{1}$.
        \item $(Y' \circ A)_{ij} = Y_{ij}A_{ij} (\overline{E}_{ij} + c) \frac{1}{c+1} \leq 0$, because $Y_{ij}A_{ij} \leq 0$ and $\overline{E}_{ij} + c \geq 0$, by choice of $c$.
    \end{enumerate}

    And we evaluate $\langle J, Y' \rangle$:
    \begin{align*}
    \langle J, Y' \rangle &= \frac{1}{c+1} \left\langle Y, E \frac{n}{\mathrm{Tr}E} + cJ \right\rangle \\
    &= \frac{n}{c+1} \left(\mu + \frac{c}{\vartheta^{-}(\overline{G})}\right) \\
    &> \frac{n}{\vartheta^{-}(\overline{G})} \\
    &= \vartheta^{+}(G).
    \qedhere \end{align*}
\end{proof}

We now have all the ingredients to prove Theorem \ref{thm:highly_regular}.

\begin{proof}[Proof of Theorem \ref{thm:highly_regular}]
    Let $G$ be a highly regular graph with matrix algebra $\mathcal{A}$. Let $Y \in \mathcal{A}$ be an optimal solution for SDP \eqref{eq:SDP_szegedy} for $\overline{G}$, which exists due to Lemma \ref{lem:sdp_opt_in_algebra}.

    Let $X = \vartheta^-(G)\cdot Y$. Since $Y$ is positive semidefinite, $X$ is also positive semidefinite. Moreover, due to Lemma \ref{lem:key_lemma_highly_regular}, $\lambda_1(X) = \vartheta^-(G) \lambda_1(Y) \leq 1$, so $I - X$ is also positive semidefinite. Therefore, $X$ is a feasible solution for SDP \eqref{eq:SDP_max} with objective value:
    \begin{align*}
        \langle A, X \rangle &= \vartheta^-(G)  (\langle J,Y\rangle - \langle \overline{A},Y\rangle-1) \\
        &\geq \vartheta^-(G)\vartheta^+(\overline{G}) - \vartheta^-(G) \\
        &= n-\vartheta^-(G).
\qedhere    \end{align*}
\end{proof}

\section{Concluding remarks}\label{sec:concludingremarks}

In this work, we devised an SDP approach for tackling Conjecture~\ref{con:energy_lb}. Using the formulation presented in Theorem~\ref{thm:SDP_max}, we obtained several new bounds on the energy (Theorems \ref{thm:energy_n_minus_chi_f}, \ref{thm:energy_n_minus_ratio} and \ref{thm:energy_regular_scaled_complement}) which improve on prior results by Hoffman \cite{hoffman2003eigenvalues} and Nikiforov \cite{nikiforov2007energy}. Additionally, we strengthened both bounds for the class of highly regular graphs (Theorem \ref{thm:highly_regular}). Finally, we note that our method for proving Theorem \ref{thm:energy_n_minus_chi_f} makes use of the new Lemma \ref{lem:convex_comb_orth_matrices}, which is interesting in its own right and improves results by Andrade at. al. \cite[Theorem 1.1]{andrade2017lower} and by Day and So \cite[Theorem 3.4]{day2008graph}.

\medskip

We end with some future research directions.
\begin{itemize}
    \item Denote by $\xi_f(G)$ the \emph{projective rank} of $G$. Based on the results from Elphick et al.~\cite{elphick2025spectrallowerboundchromatic}, we expect that Lemma \ref{lem:convex_comb_orth_matrices}, used to derive the lower bound $n-\chi_f(\overline{G})$ on the graph energy, may also prove useful in proving the following weakening of Conjecture~\ref{con:energy_lb}:
\begin{conjecture}
    \label{que:xi_f}
    Let $G$ be a $n$-vertex graph, then
    \begin{equation*}
        \frac{1}{2} \mathcal{E}(G) \geq n-\xi_f(\overline{G}).
    \end{equation*}
\end{conjecture}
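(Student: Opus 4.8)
The plan is to prove Conjecture~\ref{que:xi_f} by following the same strategy used in the proof of Theorem~\ref{thm:energy_n_minus_chi_f}, but replacing the clique-cover interpretation of $\chi_f(\overline{G})$ with the vector-coloring/orthonormal-representation interpretation that underlies the projective rank $\xi_f(G)$. Recall that $\xi_f(G)$ admits a fractional-covering style characterization: $\xi_f(G) = d/r$ is the infimum of ratios for which there is an assignment of rank-$r$ projection matrices $P_v \in \mathbb{C}^{d\times d}$ (or $\mathbb{R}^{d\times d}$, with a factor-of-two adjustment) to the vertices of $G$ such that $P_u P_v = 0$ whenever $u \sim v$. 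Equivalently, for the complement, one gets a family of projectors that are orthogonal on non-edges of $G$, i.e.\ on edges of $\overline{G}$. Summing the projectors, $\sum_v P_v$ commutes appropriately and its structure is what we want to feed into Lemma~\ref{lem:convex_comb_orth_matrices}.

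First I would set up the right family of matrices $Q_i$ for Lemma~\ref{lem:convex_comb_orth_matrices} with $M = A$. Given an optimal (or near-optimal) projective representation of $\overline{G}$ by rank-$r$ projectors $P_v$ in dimension $d$, write each $P_v = R_v R_v^T$ with $R_v \in \mathbb{R}^{n' \times r}$ having orthonormal columns (here $n'=d$ after a real embedding). The key point is that $\{v : \text{$w$-th coordinate of the representation is ``used''}\}$ — more precisely, for each of the $d$ coordinate directions $e_w$, one collects the vertices $v$ for which $e_w^T P_v e_w \neq 0$; because orthogonal projectors attached to adjacent vertices of $\overline{G}$ (non-edges of $G$) kill each other, the supports behave like independent sets of $\overline{G}$, hence cliques of $G$. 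This is exactly the mechanism by which $\xi_f$ plays the role of a ``fractional clique cover with projections.'' I would then define $Q_i$ to be the submatrix of the relevant $R_v$'s restricted to one such coordinate/eigenspace block, and choose weights $y_i$ so that $\sum_i y_i Q_i Q_i^T = I$, which forces $\sum_i y_i k_i = \operatorname{tr} I = n$ and $\sum_i y_i = \xi_f(\overline{G})$ by the defining ratio. Applying Lemma~\ref{lem:convex_comb_orth_matrices} and using that each block $Q_i^T A Q_i$ is (essentially) the adjacency matrix of a clique — so $\tfrac12\mathcal{E}(Q_i^T A Q_i) = k_i - 1$ — would give $\tfrac12\mathcal{E}(G) \geq \sum_i y_i(k_i-1) = n - \xi_f(\overline{G})$.

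The technical heart, and the step I expect to be the main obstacle, is verifying that the blocks $Q_i^T A Q_i$ really are all-ones matrices (adjacency matrices of cliques of $G$) rather than some messier matrices whose positive energy is harder to control. In the combinatorial case of Lemma~\ref{lem:fractional_subgraph_covering} this is immediate because the $Q_i$ are $0/1$ incidence matrices of clique vertex-sets, so $Q_i^T A Q_i = A[C_i] = J - I$. In the projective setting the analogous claim requires that, within each coordinate/spectral block, (i) every pair of vertices contributing to that block is adjacent in $G$ (this should follow from $P_uP_v = 0$ on non-edges of $G$, hence contributions to a common rank-one projector direction force adjacency), and (ii) the induced matrix is genuinely $J-I$ after the appropriate normalization, so that its positive energy equals $k_i - 1$ exactly. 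One natural route is to first reduce to a rational projective representation and then, possibly after tensoring/amplifying to clear denominators, realize the $Q_i$ as honest $0/1$-matrices of cliques with rational weights $y_i$ — in which case the claim reduces to Lemma~\ref{lem:fractional_subgraph_covering} applied to a carefully chosen clique system, and the only remaining work is a limiting/approximation argument letting the representation approach optimality. An alternative, perhaps cleaner, route is to prove a projective analogue of Lemma~\ref{lem:convex_comb_orth_matrices} directly at the level of SDP~\eqref{eq:SDP_max}: take the optimal vector $r$-fold projective coloring of $\overline{G}$, form $X = \tfrac{1}{\xi_f(\overline{G})}\sum_v$ (scaled projectors or their Gram structure), and check feasibility ($0 \preccurlyeq X \preccurlyeq I$) together with $\langle A, X\rangle = n - \xi_f(\overline{G})$; feasibility would again hinge on bounding the top eigenvalue of the projector sum, which is the genuine difficulty and where the orthogonality on edges of $\overline{G}$ must be exploited in full.

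Once this is in place, I would record the immediate consequence that Conjecture~\ref{con:energy_lb} follows for every graph with $\alpha(G) = \xi_f(\overline{G})$, and note that since $\xi_f(\overline{G}) \leq \vartheta^-(G) \leq \chi_f(\overline{G})$, this statement is formally stronger than Theorem~\ref{thm:energy_n_minus_chi_f} (and, on highly regular graphs where $\vartheta^- = \xi_f$ need not hold in general, it would complement rather than subsume Theorem~\ref{thm:highly_regular}). Ironing out the exact relationship with $\vartheta^-$ — in particular whether the method can be pushed to give $n - \xi_f(\overline{G})$ for all graphs and hence close part of the gap toward $n-\alpha(G)$ — is exactly the open direction flagged here, so the proof proposal also serves to delineate how far the Lemma~\ref{lem:convex_comb_orth_matrices} technique can reach.
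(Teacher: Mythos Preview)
The paper does not prove this statement: Conjecture~\ref{que:xi_f} appears only in the concluding remarks as an open problem, with the comment that Lemma~\ref{lem:convex_comb_orth_matrices} ``may also prove useful'' in attacking it. There is therefore no proof to compare your attempt against, and what you have written is, as you yourself say, a strategy rather than a proof.

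The central gap you flag is real and is not closed by anything in your outline. Your claim (i), that two vertices contributing to a common coordinate direction $e_w$ must be adjacent in $G$, does not follow from the orthogonality $P_uP_v=0$ for $u\sim_{\overline{G}}v$: that condition only forces the \emph{ranges} of $P_u$ and $P_v$ to be orthogonal subspaces, and a fixed vector $e_w$ can have nonzero components in two mutually orthogonal subspaces simultaneously. Consequently the coordinate-wise blocks $Q_i^T A Q_i$ are in general not $J-I$, and the clean identity $\tfrac12\mathcal{E}(Q_i^T A Q_i)=k_i-1$ is unavailable. The alternative ``direct feasibility'' route via bounding $\lambda_1\!\left(\sum_v P_v\right)$ is exactly the step that is not known; without it there is no proof. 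Your rationalization-and-limit idea does not bypass this either, since rounding a projective representation to $0/1$ clique indicators is precisely the passage from $\xi_f$ back up to $\chi_f$, which would only recover Theorem~\ref{thm:energy_n_minus_chi_f}.

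One further correction: the chain you quote is reversed. The known ordering is $\alpha(G)\le\vartheta^-(G)\le\vartheta(G)\le\vartheta^+(G)\le\xi_f(\overline{G})\le\chi_f(\overline{G})$, so Conjecture~\ref{que:xi_f} is indeed stronger than Theorem~\ref{thm:energy_n_minus_chi_f} but \emph{weaker} than Theorem~\ref{thm:highly_regular} on the graphs where the latter applies; it does not sit below $\vartheta^-$.
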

    \item Another approach to tackle Conjecture \ref{con:energy_lb} in general could go through induction. If we partition a graph $G$ into $G_1,G_2$, then it holds that $\mathcal{E}(G) \geq \mathcal{E}(G_1)+\mathcal{E}(G_2)$. However, it is quite hard to control $\alpha(G_1)$ and $\alpha(G_2)$, furthermore, there exist graphs such as complete graphs, odd cycles, and the Moser Spindle graph satisfying $\mathcal{E}(G_1)+\mathcal{E}(G_2) < n-\alpha(G)$, for all non-trivial partitions. 
\end{itemize}

\subsection*{Acknowledgments}

Aida Abiad is supported by NWO (Dutch Research Council) through the grants VI.Vidi.213.085 and OCENW.KLEIN.475. Luuk Reijnders is supported through the grant VI.Vidi.213.085. Gabriel Coutinho and Emanuel Juliano are supported by FAPEMIG and CNPq grants.

\bibliographystyle{abbrv}
\bibliography{references.bib}

 \newpage
    \section*{Appendix: computational experiments}\label{sec:computational}

\begin{table}[H]
    \centering
    \tiny
        \begin{tabular}{l|ccc|c}
        Graph & SDP \eqref{eq:SDP_adjacency} ($b=0$) & Theorem \ref{thm:energy_n_minus_chi_f} & SDP \eqref{eq:SDP_adjacency} & SDP \eqref{eq:SDP_max} \\
        \hline
        Balaban 10-cage & $1.0$ & $1.0$ & $0.91544$ & $0.64477$ \\
        Balaban 11-cage & $1.04568$ & $1.07143$ & $0.96127$ & $0.71395$ \\
        Bidiakis cube & $1.08142$ & $1.16667$ & $0.9676$ & $0.86174$ \\
        Biggs-Smith graph & $1.06664$ & $1.15686$ & $0.98635$ & $0.77505$ \\
        Blanusa First Snark Graph & $1.03898$ & $1.11111$ & $0.95707$ & $0.70215$ \\
        Blanusa Second Snark Graph & $1.09535$ & $1.22222$ & $1.00994$ & $0.7796$ \\
        Brinkmann graph & $1.14076$ & $1.33333$ & $0.89242$ & $0.72865$ \\
        Brouwer-Haemers & $1.1$ & $1.22222$ & $0.47143$ & $0.47143$ \\
        Bucky Ball & $1.12361$ & $1.2$ & $1.07931$ & $0.77285$ \\
        Cell 600 & $1.04721$ & $1.06667$ & $0.91184$ & $0.61114$ \\
        Chvatal graph & $1.16666$ & $1.33333$ & $0.95414$ & $0.83668$ \\
        Clebsch graph & $1.1$ & $1.375$ & $0.73333$ & $0.73333$ \\
        Coclique graph of Hoffmann-Singleton graph & $1.0$ & $1.0$ & $0.74075$ & $0.41666$ \\
        Conway-Smith graph for 3S7 & $1.0$ & $1.07143$ & $0.69827$ & $0.53572$ \\
        Coxeter Graph & $1.03128$ & $1.14286$ & $0.87194$ & $0.74469$ \\
        Desargues Graph & $1.0$ & $1.0$ & $0.8772$ & $0.625$ \\
        Dejter Graph & $1.0$ & $1.0$ & $0.8771$ & $0.48607$ \\
        Dodecahedron & $1.04721$ & $1.2$ & $0.92994$ & $0.81587$ \\
        Double star snark & $1.05458$ & $1.13333$ & $0.98618$ & $0.72948$ \\
        Durer graph & $1.16357$ & $1.14286$ & $1.06138$ & $0.88259$ \\
        Dyck graph & $1.0$ & $1.0$ & $0.89402$ & $0.62951$ \\
        Ellingham-Horton 54-graph & $1.0$ & $1.0$ & $0.98243$ & $0.65042$ \\
        Ellingham-Horton 78-graph & $1.0$ & $1.0$ & $0.99441$ & $0.64273$ \\
        Errera graph & $1.05173$ & $1.05882$ & $0.89508$ & $0.719$ \\
        F26A Graph & $1.0$ & $1.0$ & $0.87694$ & $0.63552$ \\
        Flower Snark & $1.04879$ & $1.1$ & $0.9209$ & $0.69558$ \\
        Folkman Graph & $1.0$ & $1.0$ & $0.87394$ & $0.72475$ \\
        Foster Graph & $1.0$ & $1.0$ & $0.91384$ & $0.65795$ \\
        Foster graph for 3.Sym(6) graph & $1.0$ & $1.0$ & $0.71428$ & $0.58824$ \\
        Franklin graph & $1.0$ & $1.0$ & $0.88185$ & $0.63398$ \\
        Frucht graph & $1.03808$ & $1.0$ & $0.95425$ & $0.77462$ \\
        Goldner-Harary graph & $0.97521$ & $1.0$ & $0.71042$ & $0.53546$ \\
        Golomb graph & $1.16577$ & $1.0$ & $0.94906$ & $0.72263$ \\
        Gosset Graph & $1.03174$ & $1.08333$ & $0.60819$ & $0.57778$ \\
        Gray graph & $1.0$ & $1.0$ & $0.9176$ & $0.70164$ \\
        Gritsenko strongly regular graph & $1.03621$ & $1.08923$ & $0.40691$ & $0.40691$ \\
        Grotzsch graph & $0.98724$ & $1.09091$ & $0.75468$ & $0.68956$ \\
        Hall-Janko graph & $1.0$ & $1.2$ & $0.35715$ & $0.35715$ \\
        Harborth Graph & $1.08702$ & $1.0$ & $1.0645$ & $0.75935$ \\
        Harries Graph & $1.0$ & $1.0$ & $0.91544$ & $0.64182$ \\
        Harries-Wong graph & $1.0$ & $1.0$ & $0.91544$ & $0.64182$ \\
        Heawood graph & $1.0$ & $1.0$ & $0.82968$ & $0.60946$ \\
        Herschel graph & $0.93501$ & $1.0$ & $0.82489$ & $0.63474$ \\
        Hexahedron & $1.0$ & $1.0$ & $0.88889$ & $0.66668$ \\
        Higman-Sims graph & $1.06364$ & $1.56$ & $0.44319$ & $0.44319$ \\
        Hoffman Graph & $1.0$ & $1.0$ & $0.85714$ & $0.66666$ \\
        Hoffman-Singleton graph & $1.0$ & $1.4$ & $0.55555$ & $0.55556$ \\
        Holt graph & $1.03915$ & $1.25926$ & $0.81822$ & $0.72004$ \\
        Horton Graph & $1.0$ & $1.0$ & $0.99092$ & $0.64086$ \\
        Icosahedron & $1.08541$ & $1.125$ & $0.87156$ & $0.76869$ \\
        Kittell Graph & $1.08107$ & $1.04348$ & $0.91998$ & $0.7189$ \\
        Klein 3-regular Graph & $1.09245$ & $1.17857$ & $0.98771$ & $0.7908$ \\
        Klein 7-regular Graph & $1.03347$ & $1.125$ & $0.6926$ & $0.63906$ \\
        Krackhardt Kite Graph & $1.47121$ & $1.0$ & $1.08809$ & $0.75307$ \\
        Ljubljana graph & $1.0$ & $1.0$ & $0.93057$ & $0.66179$ \\
        M22 Graph & $1.0$ & $1.45455$ & $0.44444$ & $0.44446$ \\
        Markstroem Graph & $1.08306$ & $1.04651$ & $1.0271$ & $0.80824$ \\
        McGee graph & $1.08142$ & $1.16667$ & $0.92554$ & $0.78581$ \\
        Meredith Graph & $1.01474$ & $1.02857$ & $0.9915$ & $0.74437$ \\
        Moebius-Kantor Graph & $1.0$ & $1.0$ & $0.85655$ & $0.6188$ \\
        Moser spindle & $1.17762$ & $1.11111$ & $1.03853$ & $0.8948$ \\
        Murty Graph & $1.08786$ & $1.07143$ & $1.01408$ & $0.78328$ \\
        Nauru Graph & $1.0$ & $1.0$ & $0.86957$ & $0.66667$ \\
        Octahedron & $1.0$ & $1.0$ & $1.0$ & $1.0$ \\
        Pappus Graph & $1.0$ & $1.0$ & $0.84844$ & $0.67204$ \\
        Perkel Graph & $1.0$ & $1.33333$ & $0.6636$ & $0.63333$ \\
        Petersen graph & $1.0$ & $1.2$ & $0.75$ & $0.75$ \\
        Poussin Graph & $1.04016$ & $1.0$ & $0.85787$ & $0.70888$ \\
        Robertson Graph & $1.07231$ & $1.26316$ & $0.81874$ & $0.68174$ \\
        Schläfli graph & $1.0$ & $1.06667$ & $0.6$ & $0.60001$ \\
        Shrikhande graph & $1.0$ & $1.125$ & $0.66666$ & $0.66667$ \\
        Sims-Gewirtz Graph & $1.0$ & $1.42857$ & $0.5$ & $0.5$ \\
        Sousselier Graph & $1.13844$ & $1.25$ & $0.92397$ & $0.77303$ \\
        Sylvester Graph & $1.06666$ & $1.33333$ & $0.72727$ & $0.64865$ \\
        Szekeres Snark Graph & $1.08457$ & $1.16$ & $1.03988$ & $0.74482$ \\
        Tetrahedron & $1.0$ & $1.0$ & N/A & $1.0$ \\
        Thomsen graph & $1.0$ & $1.0$ & $1.0$ & $1.0$ \\
        Tietze Graph & $1.0311$ & $1.07692$ & $0.91271$ & $0.72875$ \\
        Tricorn Graph & $1.0$ & $1.0$ & $0.87379$ & $0.77332$ \\
        Truncated Tetrahedron & $1.11111$ & $1.0$ & $0.9697$ & $0.88889$ \\
        Tutte 12-Cage & $1.0$ & $1.0$ & $0.91223$ & $0.68018$ \\
        Tutte-Coxeter graph & $1.0$ & $1.0$ & $0.86207$ & $0.71428$ \\
        Twinplex Graph & $1.08142$ & $1.16667$ & $0.91091$ & $0.74549$ \\
        Wagner Graph & $1.12796$ & $1.25$ & $0.94839$ & $0.85786$ \\
        Wells graph & $1.1$ & $1.375$ & $0.78803$ & $0.66892$ \\
        Wiener-Araya Graph & $1.11976$ & $1.19048$ & $1.07161$ & $0.77172$ \\
        \hline
    \end{tabular}  
    \caption{$n-\alpha(G)$ divided by the bound obtained from the various approaches. If at most $1$, the approach verifies Conjecture \ref{con:energy_lb}.}
    \label{tab:named_comp}
\end{table}

    \begin{table}[H]
        \centering
         \begin{tabular}{c|ccc|c}
         \hline
        $n$ & SDP \eqref{eq:SDP_adjacency} ($b=0$) & Theorem \ref{thm:energy_n_minus_chi_f} & SDP \eqref{eq:SDP_adjacency} &  Total\\
        \hline
        $1$ & $0.0\%$ & $0.0\%$ & $0.0\%$ & $0.0\%$ \\
        $2$ & $50.0\%$ & $50.0\%$ & $0.0\%$ & $50.0\%$\\
        $3$ & $50.0\%$ & $75.0\%$ & $25.0\%$ & $75.0\%$\\
        $4$ & $63.6\%$ & $90.9\%$ & $45.5\%$ & $90.9\%$\\
        $5$ & $44.1\%$ & $94.1\%$ & $44.1\%$ & $97.1\%$\\
        $6$ & $30.7\%$ & $96.8\%$ & $40.3\%$ & $97.4\%$\\
        $7$ & $16.2\%$ & $96.3\%$ & $40.5\%$ & $96.6\%$\\
        $8$ & $7.5\%$ & $95.6\%$ & $39.8\%$ & $95.9\%$\\
        $9$ & $3.0\%$ & $93.8\%$ & $39.9\%$ & $94.1\%$\\
        \hline
        \end{tabular}
        \caption{Success rates of the three approaches over all graphs of given order $n$.}
        \label{tab:small_comp}
    \end{table}
     \newpage
\end{document}